\def\rr{{\mathbb R}}
\def\rn{{\mathbb{R}^n}}
\def\zz{{\mathbb Z}}
\def\nn{{\mathbb N}}
\def\fz{\infty }
\def\az{\alpha}
\def\lz{\lambda}
\def\lf{\left}
\def\r{\right}
\def\supp{\mathop\mathrm{\,supp\,}}
\def\XXint#1#2#3{{\setbox0=\hbox{$#1{#2#3}{\int}$ }
\vcenter{\hbox{$#2#3$ }}\kern-.6\wd0}}
\DeclareMathOperator{\esssup}{ess\,sup}
\def\({\left(}
\def \){ \right)}
\def\lz{{\lambda}}
\newtheorem{theorem}{Theorem}[section]
\newtheorem{corollary}{Corollary}[section]
\newtheorem{lemma}{Lemma}[section]
\newtheorem{definition}{Definition}[section]
\newtheorem{proposition}{Proposition}[section]
\newtheorem{remark}{Remark}[section]
\renewcommand{\appendix}{\par
   \setcounter{section}{0}%
   \setcounter{subsection}{0}%
   \setcounter{subsubsection}{0}%
   \gdef\thesection{\@Alph\c@section}%
   \gdef\thesubsection{\@Alph\c@section.\@arabic\c@subsection}%
   \gdef\theHsection{\@Alph\c@section.}%
   \gdef\theHsubsection{\@Alph\c@section.\@arabic\c@subsection}%
   \csname appendixmore\endcsname
 }
\numberwithin{equation}{section}
\begin{document}
\title{\bf\Large Herz-slice spaces and applications
\footnotetext{\hspace{-0.35cm} 2020 \emph{\it
Mathematics Subject Classification}. Primary 42B35;
Secondary 35R35, 46E30, 42B25.
\endgraf{\emph {Key words}}. Herz-slice spaces; weak spaces; dual spaces; the Hardy--Littlewood maximal operator.
\endgraf This project is supported
by the National Natural Science Foundation of China (Grant No. 12061069),
the Natural Science Foundation Project of Chongqing, China (Grant No. cstc2021jcyj-msxmX0705).}}
\author{Yuan Lu, Jiang Zhou\,\footnote{Corresponding author
e-mail: \texttt{Zhoujiang@xju.edu.cn}.
}, ~Songbai Wang
\\[.5cm]}
\date{}
\maketitle

\vspace{-0.7cm}

\begin{center}

\begin{minipage}{13cm}
{\small {\bf Abstract}\quad Let $\alpha\in\rr$, $t\in(0,\infty)$, $p\in(0,\infty]$, $r\in(1,\fz)$ and $q\in[1,\fz]$.
We introduce the homogeneous Herz-slice space $(\dot KE_{q,r}^{\alpha,p})_t(\rn)$, the non-homogeneous Herz-slice space
$(KE_{q,r}^{\alpha,p})_t(\rn)$
and show some properties of them. As an application, the bounds for
the Hardy--Littlewood maximal operator on these spaces is considered.

}
\end{minipage}
\end{center}

%
%

\section{Introduction\label{s1}}
The homogeneous Herz-slice space $(\dot KE_{q,r}^{\alpha,p})_t(\rn)$ and the non-homogeneous Herz-slice space
$(\dot KE_{q,r}^{\alpha,p})_t(\rn)$ studied in this paper are associated with classical Herz spaces and
the slice space. We will first give a short history of the slice space.

In 1926, Wiener \cite{WN} first introduced amalgam spaces to formulate his generalized harmonic analysis.
In general,
for $p,\,q\in(0,\infty)$, the amalgam space $(L^{p},\ell^{q})(\mathbb R)$ is defined by
$$
(L^{p},\ell^{q})(\mathbb R):
=\lf\{f\in L_{\mathrm{loc}}^p(\rr):\,\lf[\sum_{n\in\mathbb Z}\lf\|f\mathbf 1_{[n,n+1)}\r\|_{L^p(\rr)}^q\r]^\frac1q<\infty\r\}.
$$
But the first systematic study of these spaces was undertaken by Holland \cite{HF} in 1975.
In recognition of Wiener's first use of amalgams, Feichtinger initially called these spaces
"Wiener-type spaces", and then,
adopted the
name "Wiener amalgam spaces",
that's also the most general definition of the amalgam space so far which provided by Feichtinger in the early 1980's
in a series of papers \cite{HGF1,HGF2,HGF3,F}.

In 2014, Auscher and Mourgoglou \cite{AM} introduced  particular cases of Wiener amalgam spaces \cite{HGF3}, the slice space $E_t^p(\rn)$ (also $(E_2^p)_t(\rn)$),
to study the classification of weak solutions in the natural classes for the boundary value
problems of a t-independent elliptic system in the upper half-space.
Moreover, in 2017, Auscher and Prisuelos-Arribas \cite{AP} introduced a more general slice space $(E^q_r)_t(\rn)$, that is,
for $t\in(0,\fz)$, $r\in(1,\fz)$ and $q\in[1,\fz]$, the slice space $(E_r^q)_t(\rn)$ is defined by the set of all measurable functions $f$ such that
$$
\|f\|_{(E_r^q)_t(\rn)}:=\lf\|\lf(\frac1{|B(\cdot,t)|}\int_{B(\cdot,t)}|f(y)|^r\,dy\r)^\frac 1r\r\|_{L^q(\rn)}<\infty,
$$
with the usual modification when $q=\infty$, in fact, when $r=2$, $q=p$, that is $E_t^p(\rn)$ in \cite{AM},
furthermore, the authors also show the boundedness of some classical operators over these spaces.
For more studies and developments about the slice space we may consult \cite{Ho,ZYYW}
and the references therein.

In 1968, Herz \cite{Herz} introduced Herz spaces in the study of absolutely convergent Fourier transforms.
For $\alpha\in\rr$, $p,\,q\in(0,\infty]$, the homogeneous Herz space
$(\dot K_q^{\alpha,p})(\rn)$ is defined by
\begin{equation}\label{DefH}
(\dot K_q^{\alpha,p})(\rn):=\lf\{f\in L_{\mathrm{loc}}^q(\rn\setminus\{0\}):\lf[\sum_{k=-\infty}^\infty2^{k\alpha p}\lf\|f\mathbf1_{S_k}\r\|_{L^q}^p\r]^\frac1p<\infty\r\},
\end{equation}
and the non-homogeneous Herz space
$(K_q^{\alpha,p})(\rn)$ is defined by
\begin{equation}\label{DefnH}
(K_q^{\alpha,p})(\rn):=\lf\{f\in L_{\mathrm{loc}}^q(\rn):\lf[\sum_{k=0}^\infty2^{k\alpha p}\lf\|f\mathbf1_{S_k}\r\|_{L^q}^p\r]^\frac1p<\infty\r\},
\end{equation}
but the first study of these spaces was undertaken by Beurling \cite{BA}.
In the 1990's, Lu and Yang \cite{LY} introduced the preceeding of the homogeneous Herz space and the
non-homogeneous Herz space with general indices and established the block
decomposition of the Herz space, from which they showed many
properties of these spaces.
In recent years, a series of papers have paid attention to the study of the Herz-type space,
we refer to \cite{HY,LSZ,LX,WHB,WMQ} and so on.

In this paper, the homogeneous Herz-slice space and the non-homogeneous Herz-slice space are introduced, we further show some properties over these spaces, such as the relationship between the homogeneous Herz-slice space and the non-homogeneous Herz-slice space, their dual spaces, a decomposition characterization of these spaces. Moreover, the bounds for the
Hardy--Littlewood maximal operator over the homogeneous Herz-slice space and the non-homogeneous Herz-slice space is obtained.

This paper is organized as follows. The definition of Herz-slice spaces, weak Herz-slice spaces and their main remarks will be given in Section \ref{m}. In Section \ref{H}, we show main properties of Herz-slice spaces.
In Section \ref{B}, we obtain the dual of Herz-slice spaces and a decomposition characterization of Herz-slice spaces.
In the finial section, the boundedness of the Hardy--Littlewood maximal function is given on the homogeneous Herz-slice space $(\dot KE_{q,r}^{\alpha,p})_t(\rn)$ and the non-homogeneous Herz-slice space $(KE_{q,r}^{\alpha,p})_t(\rn)$.

Finally, we make some conventions on notation. Let $B_k=B(0,2^k)=\{x\in\rn:\,|x|\leq2^k\}$ and $S_k:=B_k\setminus B_{k-1}$ for any $k\in\zz$.
Denote $\mathbf 1_{k}=\mathbf 1_{S_k}$ for $k\in\zz$, and $\mathbf 1_{S_0}=\mathbf 1_{B_0}$,
where $\mathbf 1_{k}$ is the characteristic function of $S_k$.
We write $A\lesssim B$ to mean that there exists a positive constant $C$ such that $A\leq CB$.
$A\sim B$ denotes that $A\lesssim B$ and $B\lesssim A$.
Throughout this paper, the letter $C$ will be used for positive
constants independent of relevant variables that may change from
one occurrence to another.
\section{Main definitions}\label{m}
To state the definition of the Herz-slice space,
we recall some necessary definitions.

For $p\in(0,\infty)$, the Lebesgue space $L^p(\mathbb R^n)$
is defined as the set of all measurable functions $f$ on $\mathbb R^n$ such that
$$
\|f\|_{L^p(\mathbb R^n)}:=\lf[\int_{\mathbb R^n}|f(x)|^p\,dx\r]^\frac1p<\infty.
$$
The weak Lebesgue space $L^{p,\infty}(\mathbb R^n)$ is defined as the set of all measurable functions $f$ on $\mathbb R^n$ such that
$$
\|f\|_{L^{p,\infty}(\mathbb R^n)}:=\sup_{\alpha>0}\alpha|\{x\in\mathbb R^n:\,|f(x)|>\alpha\}|^\frac1p<\infty.
$$
For $p=\infty$,
$$
\|f\|_{L^{\infty}(\mathbb R^n)}:=\underset{x\in\mathbb R^n}{\esssup}|f(x)|<\infty.
$$
\begin{definition}\label{Def}
Let $\alpha\in\rr$, $t\in(0,\infty)$, $p\in(0,\infty]$, $r\in(1,\fz)$ and $q\in[1,\fz]$. The homogeneous Herz-slice space
$(\dot KE_{q,r}^{\alpha,p})_t(\rn)$ is defined as the set of $f\in L_{\mathrm{loc}}^r(\rn\setminus\{0\})$ such that
$$
\lf\|f\r\|_{(\dot KE_{q,r}^{\alpha,p})_t(\rn)}:=\lf[\sum_{k=-\infty}^\infty2^{k\alpha p}\lf\|f\mathbf1_{S_k}\r\|_{(E_r^q)_t(\rn)}^p\r]^\frac1p<\infty,
$$
with the usual modification when $q,\,p=\infty$.

The non-homogeneous Herz-slice space $(KE_{q,r}^{\alpha,p})_t(\rn)$ is defined as the set of all measurable functions
$f$ such that
$$
\lf\|f\r\|_{(KE_{q,r}^{\alpha,p})_t(\rn)}:=\lf[\sum_{k=0}^\infty2^{k\alpha p}\lf\|f\mathbf1_{S_k}\r\|_{(E_r^q)_t(\rn)}^p\r]^\frac1p<\infty,
$$
with the usual modification when $q,\,p=\infty$.
\end{definition}
\begin{remark}
Let $\alpha\in\rr$, $t\in(0,\infty)$, $p\in(0,\infty]$, $r\in(1,\fz)$ and $q\in[1,\fz]$.
\begin{itemize}
\item[(1)] For any $s\in(0,\infty)$, we see
\begin{equation}\label{Es}
\lf\||f|^s\r\|_{(\dot KE_{q,r}^{\alpha,p})_t(\rn)}^\frac1s=\|f\|_{(\dot KE_{sq,sr}^{\alpha/s,sp})_t(\rn)};
\end{equation}
\item[(2)] $(\dot KE_{q,r}^{0,q})_t(\rn)=E^q_r(\rn)$, and for $\az\in\rr$, $(\dot KE_{q,r}^{\az,q})_t(\rn)
=(L^r,L^q_w)_t(\rn)$ with $w$ is a power weight $|x|^{\az q}$ \cite{LWZ};
\item[(3)] For $\az\in\rr$, if $r=q$, $(\dot KE_{q,r}^{\az,p})_t(\rn)=(\dot K_q^{\alpha,p})(\rn)$, and
$(KE_{q,r}^{\az,p})_t(\rn)=(K_q^{\alpha,p})(\rn)$;
\item[(4)] $(\dot KE_{p,p}^{0,p})_t(\rn)=L^p(\rn)$, and for $\az\in\rr$,
$(\dot KE_{p,p}^{\az,p})_t(\rn)=L^p_w(\rn)$ with $w=|x|^{\az q}$.
\end{itemize}
\end{remark}
\begin{definition}
Let $t\in(0,\fz)$, $r\in(1,\fz)$ and $q\in[1,\fz]$, the weak slice space
$W(E_r^q)_t(\rn)$ is defined as the set of all measurable functions $f$ such that
$$
\|f\|_{W(E_r^q)_t(\rn)}:=\sup\limits_{\lz>0}\lz\lf\|\mathbf 1_{\{x\in \rn:~|f(x)|>\lambda\}}\r\|_{(E_r^q)_t(\rn)}<\infty.
$$
In fact, the weak slice space is the weak amalgam space $W(L^r_w,L^q_v)_t(\rn)$ with $w=v=1$ in \cite{LWZ}.
\end{definition}
\begin{definition}
Let $\alpha\in\rr$, $t\in(0,\infty)$, $p\in(0,\infty]$, $r\in(1,\fz)$ and $q\in[1,\infty]$. The homogeneous weak Herz-slice space
$W(\dot KE_{q,r}^{\alpha,p})_t(\rn)$ is defined as the set of all measurable functions $f$ such that
$$
\|f\|_{W(\dot KE_{q,r}^{\alpha,p})_t(\rn)}:=\sup\limits_{\lz>0}\lz\lf(\sum_{k\in\zz}2^{k\az p}\lf\|\mathbf 1_{\lf\{x\in S_k:~|f(x)|>\lambda\r\}}\r\|^p_{(E_r^q)_t(\rn)}\r)^\frac1p<\infty.
$$
The non-homogeneous weak Herz-slice space $W(KE_{q,r}^{\alpha,p})_t(\rn)$ is defined as the set of all measurable functions $f$ such that
$$
\|f\|_{W(KE_{q,r}^{\alpha,p})_t(\rn)}:=\sup\limits_{\lz>0}\lz\lf(\sum_{k=0}^\fz2^{k\az p}\lf\|\mathbf 1_{\lf\{x\in S_k:~|f(x)|>\lambda\r\}}\r\|^p_{(E_r^q)_t(\rn)}\r)^\frac1p<\infty.
$$
\end{definition}
It is obvious that $W(\dot KE_{r,p}^{0,p})_t(\rn)=W(KE_{r,p}^{0,p})_t(\rn)=W(E_r^p)_t(\rn)$, and for $r=q$, $W(\dot KE_{r,q}^{\az,p})_t(\rn)$ and $W(KE_{r,q}^{\az,p})_t(\rn)$ are the homogeneous weak Herz space $W(\dot K_q^{\alpha,p})(\rn)$ and the non-homogeneous weak Herz space $W(K_q^{\alpha,p})(\rn)$ in \cite{LYH}, respectively.
\section{Properties of Herz-slice spaces}\label{H}
In this section, some main properties of the homogeneous Herz-slice space $(\dot KE_{q,r}^{\alpha,p})_t(\rn)$ and the non-homogeneous Herz-slice space
$(KE_{q,r}^{\alpha,p})_t(\rn)$ will be given.
\begin{proposition}
$(\dot KE_{q,r}^{\alpha,p})_t(\rn)$ and $(KE_{q,r}^{\alpha,p})_t(\rn)$ are quasi-Banach function spaces, and if $p,q,r\ge1$, $(\dot KE_{q,r}^{\alpha,p})_t(\rn)$ and $(KE_{q,r}^{\alpha,p})_t(\rn)$ are Banach spaces;
\end{proposition}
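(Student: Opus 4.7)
The plan is to verify the quasi-norm axioms and completeness separately, treating the homogeneous case in detail (the non-homogeneous case being entirely parallel since only the range of the summation index $k$ differs). Positivity ($\|f\|=0\Rightarrow f=0$ almost everywhere) and absolute homogeneity follow directly from Definition~\ref{Def} together with the analogous properties of the slice space $(E_r^q)_t(\rn)$.

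For the (quasi-)triangle inequality, I would argue in two layers. Since $r\in(1,\fz)$ and $q\in[1,\fz]$, the slice space $(E_r^q)_t(\rn)$ is already a Banach function space: on each annulus $S_k$, Minkowski applied first to the inner $L^r$-average and then to the outer $L^q$-norm yields
\[
\lf\|(f+g)\mathbf 1_{S_k}\r\|_{(E_r^q)_t(\rn)}\le \lf\|f\mathbf 1_{S_k}\r\|_{(E_r^q)_t(\rn)}+\lf\|g\mathbf 1_{S_k}\r\|_{(E_r^q)_t(\rn)}.
\]
Applying the $\ell^p$ (quasi-)triangle to the sequence $\{2^{k\az}\|(f+g)\mathbf 1_{S_k}\|_{(E_r^q)_t(\rn)}\}_{k\in\zz}$ then closes the argument: when $p\ge 1$, Minkowski in $\ell^p$ gives a genuine triangle inequality (producing the Banach case $p,q,r\ge 1$); when $0<p<1$, the elementary bound $(a+b)^p\le a^p+b^p$ gives a quasi-triangle with constant at most $2^{1/p-1}$.

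Completeness is the substantive step. Given a Cauchy sequence $\{f_m\}$ in $(\dot KE_{q,r}^{\az,p})_t(\rn)$, for each fixed $k\in\zz$ the restriction $\{f_m\mathbf 1_{S_k}\}$ is Cauchy in the complete space $(E_r^q)_t(\rn)$ (see \cite{AP}), so it converges there to some $g_k$ supported in $S_k$. Extracting a telescoping subsequence $\{f_{m_j}\}$ with $\|f_{m_{j+1}}-f_{m_j}\|\le 2^{-j}$, one can upgrade this to pointwise a.e.\ convergence on every $S_k$; I would then define $f(x):=g_k(x)$ for $x\in S_k$. A Fatou-type inequality, obtained by chaining Fatou's lemma inside the inner ball-average, inside the outer $L^q$-norm, and inside the weighted $\ell^p$-sum over $k$, yields both $\|f\|_{(\dot KE_{q,r}^{\az,p})_t(\rn)}\le\liminf_m\|f_m\|_{(\dot KE_{q,r}^{\az,p})_t(\rn)}$ and, applied to $f-f_m$, the convergence $\|f-f_m\|_{(\dot KE_{q,r}^{\az,p})_t(\rn)}\to 0$.

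The main obstacle is precisely this triple-layer Fatou step, because one must carefully justify interchanging a liminf with a ball-average, with an $L^q$-norm over $\rn$, and with a weighted $\ell^p$-sum over $\zz$. The remaining function-space axioms (the lattice property $|f|\le|g|\Rightarrow\|f\|\le\|g\|$, the Fatou property for nondecreasing sequences, and finiteness of the (quasi-)norm on characteristic functions of bounded sets away from the origin in the homogeneous case) are then immediate from the definition.
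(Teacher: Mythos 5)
Your proposal is correct, and it is essentially the standard layer-by-layer argument that the paper itself appeals to without writing it out (the paper only remarks that the proof is ``analogous to that of the classical Herz space''): Minkowski in the slice space $(E_r^q)_t(\rn)$ on each annulus, the $\ell^p$ (quasi-)triangle inequality with constant $2^{1/p-1}$ when $0<p<1$, and completeness via annulus-wise convergence in the complete slice space combined with a chained Fatou inequality, all of which go through since each layer is a genuine Lebesgue-type norm. Your closing caveat about characteristic functions of sets near the origin in the homogeneous case is also the right one to make, since for suitable $\alpha$ such functions can have infinite quasi-norm, so the homogeneous space is a quasi-Banach lattice rather than a Banach function space in the strict Bennett--Sharpley sense.
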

Since the proof is analogue to that of the classical Herz space, and the details are omitted.
\begin{proposition}
Let $\az\in\rr$, $p\in(0,\fz]$, $t\in(0,\infty)$, $r\in(1,\fz)$ and $q\in[1,\fz]$. The following inclusions are valid:
\begin{enumerate}
  \item[(1)] if $p_1\leq p_2$, then $(\dot KE_{q,r}^{\alpha,p_1})_t(\rn)\subset
  (\dot KE_{q,r}^{\alpha,p_2})_t(\rn)$ and $ (KE_{q,r}^{\alpha,p_1})_t(\rn)
  \subset(KE_{q,r}^{\alpha,p_2})_t(\rn)$;
  \item[(2)] if $\az_2\leq \az_1$, then $(KE_{q,r}^{\alpha_1,p})_t(\rn)
  \subset(KE_{q,r}^{\alpha_2,p})_t(\rn)$.
\end{enumerate}
\end{proposition}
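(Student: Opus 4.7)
The plan is to reduce each inclusion to a purely elementary monotonicity property of weighted $\ell^p$ sums, since in both definitions the Herz-slice norm is a weighted $\ell^p$ norm of the non-negative sequence $a_k := 2^{k\alpha}\|f\mathbf{1}_{S_k}\|_{(E_r^q)_t(\rn)}$ (indexed over $\zz$ in the homogeneous case and over $\nn\cup\{0\}$ in the non-homogeneous case). No property of the slice piece $\|\cdot\|_{(E_r^q)_t(\rn)}$ is needed beyond the fact that it produces a non-negative scalar for each $k$.

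For part (1), I would invoke the standard embedding $\ell^{p_1}\hookrightarrow\ell^{p_2}$ whenever $p_1\leq p_2$. Concretely, for any non-negative sequence $\{a_k\}$ one has
\[
\Big(\sum_k a_k^{p_2}\Big)^{1/p_2}\leq \Big(\sum_k a_k^{p_1}\Big)^{1/p_1},
\]
with the usual interpretation when $p_2=\infty$ (i.e., $\sup_k a_k\leq (\sum_k a_k^{p_1})^{1/p_1}$). The quick justification is the homogeneity reduction: assuming $\|a\|_{\ell^{p_1}}=1$, every $a_k\leq 1$, hence $a_k^{p_2}\leq a_k^{p_1}$. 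Applying this with $a_k=2^{k\alpha}\|f\mathbf{1}_{S_k}\|_{(E_r^q)_t(\rn)}$ over $k\in\zz$ gives the homogeneous inclusion, and over $k\geq 0$ gives the non-homogeneous one.

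For part (2), I would use that in the non-homogeneous case the index $k$ runs only over $k\geq 0$, so $\alpha_2\leq\alpha_1$ forces $2^{k\alpha_2 p}\leq 2^{k\alpha_1 p}$ termwise. Therefore
\[
\Big(\sum_{k=0}^{\infty} 2^{k\alpha_2 p}\|f\mathbf{1}_{S_k}\|_{(E_r^q)_t(\rn)}^{p}\Big)^{1/p}\leq \Big(\sum_{k=0}^{\infty} 2^{k\alpha_1 p}\|f\mathbf{1}_{S_k}\|_{(E_r^q)_t(\rn)}^{p}\Big)^{1/p},
\]
with the evident modification when $p=\infty$. This yields $\|f\|_{(KE_{q,r}^{\alpha_2,p})_t(\rn)}\leq \|f\|_{(KE_{q,r}^{\alpha_1,p})_t(\rn)}$. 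I would also remark in passing why the same argument fails in the homogeneous case: for $k<0$ the inequality between the weights $2^{k\alpha_2 p}$ and $2^{k\alpha_1 p}$ reverses, so no termwise comparison is available.

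There is no real obstacle here; the proof is a two-line consequence of the embedding of weighted sequence spaces, and the only thing to watch is the convention when $p$ or $q$ equals $\infty$, which requires swapping sums for suprema in the natural way.
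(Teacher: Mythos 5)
Your proof is correct. For part (1) it is essentially the paper's argument: the paper reduces the inclusion to the elementary inequality $\left(\sum_k |a_k|\right)^r\le\sum_k |a_k|^r$ for $0<r<1$ (applied with $r=p_1/p_2$ to the terms $a_k^{p_2}$), which is exactly the embedding $\ell^{p_1}\hookrightarrow\ell^{p_2}$ that you invoke and justify by the normalization trick; the two formulations are interchangeable. For part (2) your route is slightly different: you use the termwise comparison of weights $2^{k\alpha_2 p}\le 2^{k\alpha_1 p}$ for $k\ge 0$, whereas the paper states that (2) ``can be deduced from the H\"older inequality directly.'' Since the outer exponent $p$ is the same on both sides of the inclusion, your termwise argument is the more economical one and yields the norm inequality with constant $1$; H\"older would only become relevant if the summation exponent were also allowed to change. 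Your aside explaining why the same comparison fails for the homogeneous scale (the weight inequality reverses for $k<0$) is accurate and consistent with the paper asserting (2) only for the non-homogeneous space.
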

\begin{proof}
This proposition can be proved by fairly simple computation. In fact, (1)
is a consequence of the inequality in \cite{JK}
\begin{equation}\label{cp}
\lf(\sum\limits_{k=1}^\fz|a_k|\r)^r\leq\sum\limits_{k=1}^\infty|a_k|^r, ~~~~if~ 0<r<1.
\end{equation}
(2) can be deduced from the H\"older inequality directly.
\end{proof}
\begin{proposition}\label{L}
Let $0<\az,t<\infty$, $p\in(0,\fz]$, $r\in(1,\fz)$ and $q\in[1,\fz]$. Then
$$
(KE_{q,r}^{\alpha,p})_t(\rn)=(\dot KE_{q,r}^{\alpha,p})_t(\rn)\cap (E^q_r)_t(\rn),
$$
and for $f\in(\dot KE_{q,r}^{\alpha,p})_t(\rn)\cap (E^q_r)_t(\rn)$,
$$
\|f\|_{(KE_{q,r}^{\alpha,p})_t(\rn)}\sim\|f\|_{(\dot KE_{q,r}^{\alpha,p})_t(\rn)}+\|f\|_{(E^q_r)_t(\rn)}.
$$
\end{proposition}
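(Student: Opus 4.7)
The plan is to prove both inclusions of the set equality and the equivalence of norms in two passes. The key structural observation is that the non-homogeneous norm sums over $k\ge0$ with $S_0=B_0$, while the homogeneous norm sums over all $k\in\zz$; so the two norms differ in how they record the behavior of $f$ on $B_0$ (a single slice-space term vs.\ an infinite series of annular terms) but agree on the annuli $S_k$ for $k\ge 1$. The positivity of $\alpha$ is what allows the negative-index tail $\sum_{k\le 0}2^{k\az p}$ to be summable.

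For the easy direction $(\dot KE_{q,r}^{\alpha,p})_t(\rn)\cap(E^q_r)_t(\rn)\subset (KE_{q,r}^{\alpha,p})_t(\rn)$, I would split the defining sum of the non-homogeneous norm at $k=0$. The $k=0$ contribution is $\|f\mathbf1_{B_0}\|_{(E_r^q)_t(\rn)}\le\|f\|_{(E_r^q)_t(\rn)}$ by the lattice property of the slice space; the $k\ge 1$ tail is dominated term-by-term by the corresponding tail of the homogeneous sum, giving $\|f\|_{(KE_{q,r}^{\alpha,p})_t(\rn)}\ls \|f\|_{(\dot KE_{q,r}^{\alpha,p})_t(\rn)}+\|f\|_{(E_r^q)_t(\rn)}$ (with the usual quasi-norm adjustment when $p<1$).

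For the reverse inclusion, take $f\in(KE_{q,r}^{\alpha,p})_t(\rn)$. To control the homogeneous norm I split $\sum_{k\in\zz}$ into $k\ge 1$ and $k\le 0$: the first piece is a subseries of the non-homogeneous norm; for the second, since $S_k\st B_0$ for $k\le 0$, the lattice property yields $\|f\mathbf1_{S_k}\|_{(E_r^q)_t(\rn)}\le\|f\mathbf1_{B_0}\|_{(E_r^q)_t(\rn)}$, and $\sum_{k\le 0}2^{k\az p}<\fz$ thanks to $\az>0$. To control the slice-space norm I use $|f|=|f|\mathbf1_{B_0}+\sum_{k=1}^\fz|f|\mathbf1_{S_k}$ and the triangle inequality in $(E_r^q)_t(\rn)$ (valid since $q,r\ge 1$) to reduce to bounding $\sum_{k=1}^\fz\|f\mathbf1_{S_k}\|_{(E_r^q)_t(\rn)}$ by $\|f\|_{(KE_{q,r}^{\alpha,p})_t(\rn)}$. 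Writing $\|f\mathbf1_{S_k}\|_{(E_r^q)_t(\rn)}=2^{-k\az}\cdot 2^{k\az}\|f\mathbf1_{S_k}\|_{(E_r^q)_t(\rn)}$, I handle $p\ge 1$ by H\"older against $\sum 2^{-k\az p'}<\fz$ and $p<1$ by \eqref{cp}, which gives the pointwise bound $2^{k\az}\|f\mathbf1_{S_k}\|_{(E_r^q)_t(\rn)}\le\|f\|_{(KE_{q,r}^{\alpha,p})_t(\rn)}$, then sum against $\sum 2^{-k\az}<\fz$; in both regimes $\az>0$ is essential.

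The bookkeeping is elementary and the main (minor) obstacle is only the unified handling of $0<p<1$ versus $p\ge 1$ in the last step, which is resolved by separating these two cases and in the former appealing directly to inequality \eqref{cp} already stated in the excerpt. Once the two estimates above are combined, both set inclusions hold with matching norm bounds, giving the claimed equivalence $\|f\|_{(KE_{q,r}^{\alpha,p})_t(\rn)}\sim\|f\|_{(\dot KE_{q,r}^{\alpha,p})_t(\rn)}+\|f\|_{(E_r^q)_t(\rn)}$.
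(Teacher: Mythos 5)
Your proposal is correct and follows essentially the same route as the paper: split the sums at $k=0$, bound the $k=0$ piece by the slice norm and the $k\ge1$ piece by the homogeneous norm, control the negative-index tail using $S_k\subset B_0$ together with the convergence of $\sum_{k\le0}2^{k\az p}$ (this is where $\az>0$ enters), and recover $\|f\|_{(E^q_r)_t(\rn)}$ from the non-homogeneous norm via the triangle inequality plus H\"older for $p\ge1$ and the inequality \eqref{cp} (or the trivial termwise bound) for $p<1$. Your treatment of the negative-index tail via the lattice property is in fact a slightly cleaner version of the paper's claim, but the overall argument is the same.
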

\begin{proof}
If $f\in(\dot KE_{q,r}^{\alpha,p})_t(\rn)\cap (E^q_r)_t(\rn)$, then
\begin{align*}
\lf\|f\r\|^p_{(KE_{q,r}^{\alpha,p})_t(\rn)}
   &=\lf\|f\mathbf1_{S_0}\r\|_{(E^q_r)_t(\rn)}+\sum\limits_{k=1}^\infty2^{k\az p}
\lf\|f\mathbf1_{S_k}\r\|_{(E^q_r)_t(\rn)}
   \leq \|f\|^p_{(E^q_r)_t(\rn)}+\|f\|^p_{(\dot KE_{q,r}^{\alpha,p})_t(\rn)}.
\end{align*}
We claim that
$$
\sum\limits_{k=-\infty}^02^{k\az p}\lf\|f\mathbf1_{S_k}\r\|^p_{(E^q_r)_t(\rn)}
\leq C\lf\|f\mathbf1_{S_0}\r\|^p_{(E^q_r)_t(\rn)}.
$$
If $1\leq q<p$, by (\ref{cp}), then
\begin{align*}
\sum\limits_{k=-\infty}^02^{k\az p}\lf\|f\mathbf1_{S_k}\r\|^p_{(E^q_r)_t(\rn)}
&\leq \lf(\sum\limits_{k=-\infty}^02^{k\az q}\lf\|f\mathbf1_{S_k}\r\|^q_{(E^q_r)_t(\rn)}\r)^{p/q}
\leq C\lf\|f\mathbf1_{S_0}\r\|^p_{(E^q_r)_t(\rn)}.
\end{align*}
H\"older's inequality yields
$$
\sum\limits_{k=-\infty}^02^{k\az p}\lf\|f\mathbf1_{S_k}\r\|^p_{(E^q_r)_t(\rn)}
\leq \lf(\sum\limits_{k=-\infty}^02^{k\az p(q/p)'}\r)^{1/(q/p)'}\lf(\sum\limits_{k=-\infty}^0
\lf\|f\mathbf1_{S_k}\r\|^q_{(E^q_r)_t(\rn)}\r)^{p/q}
\leq C\lf\|f\mathbf1_{S_0}\r\|^p_{(E^q_r)_t(\rn)}.
$$
This proves our claim.
From the fact that $\lf\|f\mathbf1_{S_0}\r\|^p_{(E^q_r)_t(\rn)}\leq \lf\|f\r\|_{(KE_{q,r}^{\alpha,p})_t(\rn)}$,
to obtain $f\in(KE_{q,r}^{\alpha,p})_t(\rn)$ deduces $f\in(\dot KE_{q,r}^{\alpha,p})_t(\rn)\cap (E^q_r)_t(\rn)$,
it suffices to prove that
$$
\lf\|f\mathbf1_{S_0^c}\r\|_{(E^q_r)_t(\rn)}\leq \lf\|f\r\|_{(KE_{q,r}^{\alpha,p})_t(\rn)}.
$$
For $1\leq q<p$, it follows from the H\"older inequality and (\ref{Es}) that
\begin{align*}
\lf\|f\mathbf1_{S_0^c}\r\|_{(E^q_r)_t(\rn)}&=\sum\limits_{k=1}^\fz\lf\|f\mathbf1_{S_k}\r\|_{(E^q_r)_t(\rn)}\\
    &\leq \lf(\sum\limits_{k=1}^\infty2^{-k\az q(p/q)'}\r)^{1/(p/q)'}\lf(\sum\limits_{k=1}^\fz2^{k\az p}
\lf\|f\mathbf1_{S_k}\r\|^{p/q}_{(E^q_r)_t(\rn)}\r)^{q/p}\\
    &\leq C\|f\|^q_{(KE_{q,r}^{\alpha,p})_t(\rn)}.
\end{align*}
For $0<p\leq q$, using (\ref{cp}) one can see
$$
\lf\|f\mathbf1_{S_0^c}\r\|^q_{(E^q_r)_t(\rn)}
\leq\lf(\sum\limits_{k=1}^\fz\lf\|f\mathbf1_{S_k}\r\|^p_{(E^q_r)_t(\rn)}\r)^{q/p}
\leq \lf\|f\r\|^q_{(KE_{q,r}^{\az,p})_t(\rn)}.
$$
This completes the proof of proposition \ref{L}.
\end{proof}
\section{Block decompositions and dual spaces}\label{B}
This section is devoted to the decomposition characterizations and the dual spaces for Herz-slice spaces as given
in Definition \ref{Def}.
\begin{definition}
Let $0<\az,t<\infty$, $1<r<\infty$ and $1\leq q<\fz$.\\
(i) A function $a(x)$ on $\rn$ is said to be a central $(\az,q,r)$-block if
  \begin{enumerate}
    \item[(1)] $\supp(a)\subset B(0,R)$, for some $R>0$;
    \item[(2)] $\|a\|_{(E^q_r)_t(\rn)}\leq CR^{-\az}$.
  \end{enumerate}
(ii) A function $b(x)$ on $\rn$ is said to be a central $(\az,q,r)$-block of restrict type if
  \begin{enumerate}
    \item[(1)] $\supp(b)\subset B(0,R)$ for some $R\ge 1$;
    \item[(2)] $\|b\|_{(E^q_r)_t(\rn)}\leq CR^{-\az}$.
  \end{enumerate}
If $R=2^k$ for some $k\in\zz$, then the corresponding central block is called a dyadic central block.
\end{definition}
\begin{theorem}\label{Thdf}
Let $0<\az,\,t,\,p<\fz$, $1<r<\fz$ and $1\le q<\fz$. The following statements are equivalent:

(1) $f\in(\dot KE_{q,r}^{\alpha,p})_t(\rn)$.

(2) $f$ can be represented by
  \begin{equation}\label{df}
  f(x)=\sum\limits_{k\in\zz}\lz_kb_k(x),
  \end{equation}
where $\sum\limits_{k\in\zz}|\lz_k|^p<\fz$ and each $b_k$ is a dyadic central $(\az,q,r)$-block with support contained in $B_k$.
\end{theorem}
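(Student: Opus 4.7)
My plan is to prove the equivalence by verifying each direction using the canonical decomposition induced by the annuli $S_k$. For (1) $\Rightarrow$ (2), I would take the natural decomposition $f = \sum_{k \in \zz} f\mathbf{1}_{S_k}$, set $\lambda_k := 2^{k\alpha}\|f\mathbf{1}_{S_k}\|_{(E_r^q)_t(\rn)}$ and $b_k := \lambda_k^{-1} f\mathbf{1}_{S_k}$ (taking $b_k = 0$ when $\lambda_k = 0$). Then $\supp b_k \subset S_k \subset B_k$, while the normalization yields $\|b_k\|_{(E_r^q)_t(\rn)} \le (2^k)^{-\alpha}$, so each $b_k$ is a dyadic central $(\alpha,q,r)$-block, and by Definition \ref{Def},
$$
\sum_{k\in\zz} |\lambda_k|^p = \sum_{k\in\zz} 2^{k\alpha p}\|f\mathbf{1}_{S_k}\|_{(E_r^q)_t(\rn)}^p = \|f\|_{(\dot KE_{q,r}^{\alpha,p})_t(\rn)}^p < \fz.
$$

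For (2) $\Rightarrow$ (1), the key observation is that, since $\supp b_k \subset B_k$, the truncation $b_k \mathbf{1}_{S_j}$ vanishes whenever $k < j$. Using the triangle inequality in $(E_r^q)_t(\rn)$ (which is a genuine norm since $q, r \ge 1$) together with the block estimate $\|b_k\|_{(E_r^q)_t(\rn)} \le C 2^{-k\alpha}$, I would obtain
$$
\|f\mathbf{1}_{S_j}\|_{(E_r^q)_t(\rn)} \le \sum_{k\ge j} |\lambda_k| \|b_k\|_{(E_r^q)_t(\rn)} \le C \sum_{k\ge j} |\lambda_k| 2^{-k\alpha}.
$$
Writing $c_k := |\lambda_k| 2^{-k\alpha}$, the remaining task reduces to establishing
$$
\sum_{j\in\zz} 2^{j\alpha p}\Big(\sum_{k\ge j} c_k\Big)^p \lesssim \sum_{k\in\zz} |\lambda_k|^p.
$$

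This last estimate is the technical heart of the proof and splits into two cases according to the size of $p$. When $0 < p \le 1$, the subadditivity inequality \eqref{cp} gives $(\sum c_k)^p \le \sum c_k^p$, and interchanging the order of summation together with the geometric sum $\sum_{j\le k} 2^{j\alpha p} \sim 2^{k\alpha p}$ (valid since $\alpha > 0$) collapses the expression to $\sum_k |\lambda_k|^p$. When $p > 1$, I would apply H\"older's inequality to the factorization $c_k = (c_k 2^{k\alpha/2}) \cdot 2^{-k\alpha/2}$, producing
$$
\Big(\sum_{k\ge j} c_k\Big)^p \le \Big(\sum_{k\ge j} c_k^p 2^{k\alpha p/2}\Big) \Big(\sum_{k\ge j} 2^{-k\alpha p'/2}\Big)^{p/p'} \lesssim 2^{-j\alpha p/2} \sum_{k\ge j} c_k^p 2^{k\alpha p/2},
$$
and a second interchange of the sums in $j$ and $k$ closes the argument. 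The main obstacle is precisely this H\"older splitting in the case $p > 1$: the auxiliary weight $2^{\pm k\alpha/2}$ must be chosen so that \emph{both} the $k$-series in the second H\"older factor and the resulting $j$-series after swapping order converge, and this is exactly where the assumption $\alpha > 0$ is essential. The $a.e.$ convergence of $\sum_k \lambda_k b_k$ to $f$ is not a serious issue, since the partial sums are Cauchy in $(E_r^q)_t(\rn)$ on each annulus $S_j$ by the estimate above.
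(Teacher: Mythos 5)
Your proposal is correct and follows essentially the same route as the paper: the same canonical block decomposition $\lambda_k \sim 2^{k\alpha}\|f\mathbf{1}_{S_k}\|_{(E_r^q)_t(\rn)}$, $b_k=\lambda_k^{-1}f\mathbf{1}_{S_k}$ for $(1)\Rightarrow(2)$, and for $(2)\Rightarrow(1)$ the same reduction to $\|f\mathbf{1}_{S_j}\|_{(E_r^q)_t(\rn)}\le\sum_{k\ge j}|\lambda_k|\|b_k\|_{(E_r^q)_t(\rn)}$ handled via the subadditivity inequality \eqref{cp} when $0<p\le1$ and the H\"older splitting with the auxiliary weight $2^{\pm k\alpha/2}$ when $p>1$, exploiting $\alpha>0$ exactly as the paper does. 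Your explicit treatment of the degenerate case $\lambda_k=0$ and the remark on convergence of the series are harmless refinements, not a different method.
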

\begin{proof}
We suppose initially that $f\in(\dot KE_{q,r}^{\alpha,p})_t(\rn)$, write
\begin{align*}
f(x)&=\sum\limits_{k}f(x)\mathbf1_{S_k}(x)
    =\sum\limits_{k}|B_k|^{\az/n}\lf\|f\mathbf1_{S_k}\r\|_{(E^q_r)_t(\rn)}
\frac{f(x)\mathbf1_{S_k}(x)}{|B_k|^{\az/n}\lf\|f\mathbf1_{S_k}\r\|_{(E^q_r)_t(\rn)}}
    =\sum\limits_{k\in\zz}\lz_kb_k(x),
\end{align*}
where
$$
\lz_k=|B_k|^{\az/n}\lf\|f\mathbf1_{S_k}\r\|_{(E^q_r)_t(\rn)}
~~\text{and}~~
b_k(x)=\frac{f(x)\mathbf1_{S_k}(x)}{|B_k|^{\az/n}\lf\|f\mathbf1_{S_k}\r\|_{(E^q_r)_t(\rn)}}.
$$
Obviously, $\supp(b_k)\subset B_k$, and $\|b_k\|_{(E^q_r)_t(\rn)}=|B_k|^{-\az/n}$.
Thus, each $b_k$ is a dyadic central $(\az,q,r)$-block with the support $B_k$ and
$$
\sum\limits_{k}|\lz_k|^p=\sum\limits_{k}\lf(|B_k|^{\az/n}\lf\|f\mathbf1_{S_k}\r\|_{(E^q_r)_t(\rn)}\r)^p
=\sum\limits_{k}|B_k|^{\frac{\az p}n}\lf\|f\mathbf1_{S_k}\r\|^p_{(E^q_r)_t(\rn)}=
\|f\|^p_{(\dot KE_{q,r}^{\az,p})_t(\rn)}<\fz.
$$

$(2)\Rightarrow (1)$. Let $f(x)=\sum\limits_{k\in\zz}\lz_kb_k(x)$ be a decomposition of $f$ which satisfies the hypothesis (2) of Theorem \ref{Thdf}.  For each $j\in\zz$, it is readily to see that
\begin{equation}\label{edf}
\lf\|f\mathbf1_{S_j}\r\|_{(E^q_r)_t(\rn)}\leq \sum\limits_{k\ge j}|\lz_k|\lf\|b_k\r\|_{(E^q_r)_t(\rn)}.
\end{equation}
For $0<p\leq1$, from (\ref{edf}), it follows that
\begin{align*}
\|f\|^p_{(\dot KE_{q,r}^{\alpha,p})_t(\rn)}
&=\sum\limits_{k\in\zz}2^{k\alpha p}\lf\|f\mathbf1_{S_k}\r\|^p_{(E_r^q)_t(\rn)}
\leq \sum\limits_{k\in\zz}2^{k\alpha p}\lf(\sum\limits_{j\ge k}|\lz_k|^p\|b_j\|^p_{(E_r^q)_t(\rn)}\r)\\
&\leq \sum\limits_{k\in\zz}2^{k\alpha p}\lf(\sum\limits_{j\ge k}|\lz_j|^p2^{-\az jp}\r)
=\sum\limits_{k\in\zz}|\lz_k|^p\sum\limits_{j\ge k}2^{-\az (j-k)p}
\leq C\sum\limits_{k\in\zz}|\lz_k|^p<\fz.
\end{align*}

For the case of $1<p<\fz$, again by (\ref{edf}) and the H\"older inequality,
\begin{align*}
\lf\|f\mathbf1_{S_j}\r\|_{(E^q_r)_t(\rn)}&\leq
\sum\limits_{k\ge j}|\lz_k|\lf\|b_k\r\|^\frac12_{(E^q_r)_t(\rn)}\lf\|b_k\r\|^\frac12_{(E^q_r)_t(\rn)}\\
&\leq \lf(\sum\limits_{k\ge j}|\lz_k|^p\|b_k\|^{\frac p2}_{(E^q_r)_t(\rn)}\r)^\frac1p
\lf(\sum\limits_{k\ge j}\|b_k\|^{\frac{p'}2}_{(E^q_r)_t(\rn)}\r)^{\frac1{p'}}\\
&\leq \lf(\sum\limits_{k\ge j}|\lz_k|^p2^{-\frac {\az kp}2}\r)^\frac1p
\lf(\sum\limits_{k\ge j}2^{-\frac{\az kp'}2}\r)^{\frac1{p'}}.
\end{align*}
Therefore,
\begin{align*}
\lf\|f\r\|^p_{(\dot KE_{q,r}^{\alpha,p})_t(\rn)}
&\leq C\sum_{j\in\zz}2^{\az jp}
\lf(\sum\limits_{k\ge j}|\lz_k|^p2^{-\frac {\az kp}2}\r)^\frac1p
\lf(\sum\limits_{k\ge j}2^{-\frac{\az kp'}2}\r)^{\frac1{p'}}\\
&\leq C\sum_{j\in\zz}|\lz_k|^p\sum\limits_{j\leq k}2^{\az (j-k)p/2}
\leq C\sum_{k\in\zz}|\lz_k|^p<\fz.
\end{align*}
This leads to that $f\in(\dot KE_{q,r}^{\alpha,p})_t(\rn)$ and then completes the proof of Theorem \ref{Thdf}.
\end{proof}
\begin{remark}
From the proof of Theorem \ref{Thdf}, it's easy to see that
$$
\|f\|_{(\dot KE_{q,r}^{\alpha,p})_t(\rn)}\sim\lf(\sum_{k\in\zz}|\lz_k|^p\r)^\frac1p.
$$
\end{remark}
Similarly for the decompositional characterizations of the homogeneous Herz-slice space,
it shows that the decompositional over the non-homogeneous Herz-slice spaces as follows.
\begin{theorem}\label{Thndf}
Let $0<\az,\,t,\,p<\fz$, $1< r<\fz$ and $1\leq q<\fz$. The following statements are equivalent:

(1) $f\in(KE_{q,r}^{\alpha,p})_t(\rn)$.

(2) $f$ can be represented by
  \begin{equation}\label{ndf}
  f(x)=\sum\limits_{k=0}^\fz\lz_kb_k(x),
  \end{equation}
where $\sum\limits_{k=0}^\fz|\lz_k|^p<\fz$ and each $b_k$ is a dyadic central $(\az,q,r)$-block of restrict type
  with support contained in $B_k$.
Moreover,
$$
\|f\|_{(KE_{q,r}^{\alpha,p})_t(\rn)}\sim\lf(\sum_{k\ge 0}|\lz_k|^p\r)^\frac1p.
$$
\end{theorem}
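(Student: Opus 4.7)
The plan is to adapt the argument of Theorem \ref{Thdf} to the non-homogeneous setting; the only substantive changes are that the summation starts at $k=0$ (so all dyadic radii $2^k\ge 1$ satisfy the restrict-type requirement $R\ge 1$) and that the innermost piece $b_0$ has support equal to $B_0$ rather than an annulus. The convention $\mathbf 1_{S_0}=\mathbf 1_{B_0}$ from Section \ref{s1} is exactly what makes this work cleanly.

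For the implication $(1)\Rightarrow(2)$, I would start from the canonical annular decomposition
$$
f(x)=\sum_{k=0}^\infty f(x)\mathbf 1_{S_k}(x),
$$
and set
$$
\lambda_k=|B_k|^{\alpha/n}\|f\mathbf 1_{S_k}\|_{(E^q_r)_t(\rn)},\qquad b_k(x)=\frac{f(x)\mathbf 1_{S_k}(x)}{\lambda_k}
$$
(with the convention that a zero-norm piece contributes $b_k\equiv 0$). Since $k\ge 0$, each $b_k$ is supported in $B_k=B(0,2^k)$ with $2^k\ge 1$, and $\|b_k\|_{(E^q_r)_t(\rn)}=|B_k|^{-\alpha/n}$, so $b_k$ is a dyadic central $(\alpha,q,r)$-block of restrict type. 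The definition of the non-homogeneous norm then gives $\sum_{k\ge 0}|\lambda_k|^p=\|f\|^p_{(KE_{q,r}^{\alpha,p})_t(\rn)}<\infty$, which simultaneously yields the $\gtrsim$ half of the stated equivalence $\|f\|_{(KE_{q,r}^{\alpha,p})_t(\rn)}\sim (\sum_k|\lambda_k|^p)^{1/p}$.

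For $(2)\Rightarrow(1)$, given a decomposition $f=\sum_{k=0}^\infty\lambda_k b_k$ with $\supp b_k\subset B_k$, the support property yields, for each $j\ge 0$,
$$
\|f\mathbf 1_{S_j}\|_{(E^q_r)_t(\rn)}\le \sum_{k\ge j}|\lambda_k|\,\|b_k\|_{(E^q_r)_t(\rn)}\le C\sum_{k\ge j}|\lambda_k|\,2^{-\alpha k},
$$
which is exactly the inequality (\ref{edf}) restricted to $j\ge 0$. From here I would split into the two regimes $0<p\le 1$ (use the subadditivity inequality (\ref{cp}) to move the $p$-th power inside the sum, then exchange the order of summation and sum the geometric series in $2^{-\alpha(j-k)p}$) and $1<p<\infty$ (factor $\|b_k\|_{(E^q_r)_t}=\|b_k\|_{(E^q_r)_t}^{1/2}\cdot\|b_k\|_{(E^q_r)_t}^{1/2}$, apply Hölder's inequality as in the homogeneous proof, and then interchange summations). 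Because $\alpha>0$, both geometric series are finite, and one obtains
$$
\|f\|^p_{(KE_{q,r}^{\alpha,p})_t(\rn)}\le C\sum_{k\ge 0}|\lambda_k|^p,
$$
which gives both $f\in(KE_{q,r}^{\alpha,p})_t(\rn)$ and the $\lesssim$ half of the stated norm equivalence.

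The main obstacle is really only bookkeeping: one must verify that truncating the summation to $k\ge 0$ does not break any of the inner estimates from Theorem \ref{Thdf} (it does not, since in every step the $j$-index ranges over the same set as the $k$-index, and the sums $\sum_{k\ge j}$ remain over $k\ge j\ge 0$). A minor point worth checking explicitly is compatibility of the $b_0$ piece with the restrict-type block condition, which holds since $B_0=B(0,1)$ has radius $R=1\ge 1$. No new analytic input beyond the homogeneous argument is needed.
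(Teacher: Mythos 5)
Your proposal is correct and is essentially the paper's own approach: the paper omits the proof of Theorem \ref{Thndf}, stating that it follows by the same argument as Theorem \ref{Thdf}, and your adaptation (canonical annular decomposition with $\lambda_k=|B_k|^{\alpha/n}\|f\mathbf 1_{S_k}\|_{(E^q_r)_t(\rn)}$ for the forward direction, and the estimate \eqref{edf} with the $0<p\le1$ subadditivity and $1<p<\infty$ H\"older splittings for the converse, noting $2^k\ge1$ for the restrict-type condition) is exactly that argument.
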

For stating the dual spaces of Herz-slice spaces, we recall the H\"older inequality over the slice space in the following.
\begin{lemma}\label{Holder}\cite{Duiou}
Given $1\leq q\leq\infty$ and $1<r<\fz$,
$$
\|fg\|_{L^1(\mathbb R^n)}\leq\|f\|_{(E^q_r)_t(\rn)}\|g\|_{(E^{q'}_{r'})_t(\rn)},
$$
where $\frac{1}{q}+\frac{1}{q'}=\frac{1}{r}+\frac{1}{r'}=1$.
\end{lemma}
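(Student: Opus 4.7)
The plan is to reduce this two-parameter H\"older inequality to two successive applications of the classical H\"older inequality on $\rn$: an ``inner'' one with exponents $r,r'$ performed on each ball of radius $t$, followed by an ``outer'' one with exponents $q,q'$ performed on the ambient space.

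The first step is to rewrite $\int_{\rn}|f(y)g(y)|\,dy$ as a double integral. The key identity is
$$
1=\frac{1}{|B(0,t)|}\int_{\rn}\mathbf 1_{B(y,t)}(x)\,dx,
$$
which holds for every $y\in\rn$. Multiplying the integrand $|fg|$ by this factor, applying Fubini's theorem, and using the symmetry $y\in B(x,t)\iff x\in B(y,t)$, I obtain
$$
\int_{\rn}|f(y)g(y)|\,dy=\frac{1}{|B(0,t)|}\int_{\rn}\lf(\int_{B(x,t)}|f(y)g(y)|\,dy\r)dx.
$$

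Next, inside each ball $B(x,t)$ I would apply the ordinary H\"older inequality with exponents $r$ and $r'$ to the inner integral. Since $|B(x,t)|=|B(0,t)|$ is independent of $x$, splitting the normalizing factor as $|B(x,t)|^{-1/r}\cdot|B(x,t)|^{-1/r'}$ (which uses $1/r+1/r'=1$) rewrites the right-hand side as the integral over $\rn$ of the product of the two local $L^r$- and $L^{r'}$-averages that appear in the definitions of the slice-space norms.

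Finally, I would apply the classical H\"older inequality on $\rn$ with exponents $q$ and $q'$ to that outer integral; the two factors produced are precisely $\|f\|_{(E^q_r)_t(\rn)}$ and $\|g\|_{(E^{q'}_{r'})_t(\rn)}$. I do not expect a substantive obstacle: the whole argument is careful bookkeeping together with one use of Fubini and two uses of classical H\"older. The only mildly delicate point is the endpoint behaviour at $q=1$ or $q=\infty$, which is handled by the essential-supremum form of H\"older; the restriction $r\in(1,\fz)$ makes the inner application unambiguous.
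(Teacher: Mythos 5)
Your proposal is correct and complete: the identity $1=|B(0,t)|^{-1}\int_{\rn}\mathbf 1_{B(y,t)}(x)\,dx$ plus Tonelli gives the double-integral representation, the inner H\"older inequality with exponents $r,r'$ (splitting the normalization $|B(x,t)|^{-1}=|B(x,t)|^{-1/r}|B(x,t)|^{-1/r'}$) produces exactly the two local averages, and the outer H\"older inequality with exponents $q,q'$ (in its essential-supremum form at the endpoints $q=1,\infty$) yields the two slice-space norms. Note that the paper does not prove this lemma at all --- it is quoted from Heil's thesis on Wiener amalgam spaces --- so your argument is not an alternative to a proof in the paper but a self-contained verification of the cited fact; it is the standard amalgam-space H\"older argument and, as you observe, the only care needed is at the endpoints in $q$, since $r\in(1,\infty)$ keeps the inner application unambiguous.
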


\begin{theorem}\label{KDuiO}
Let $\alpha\in\rr$, $t,\,p\in(0,\infty)$, $r\in(1,\infty)$ and $q\in[1,\infty)$. The dual space of
$(\dot KE_{q,r}^{\alpha,p})_t(\rn)$ is
$$
\left(\left(\dot KE_{q,r}^{\alpha,p}\right)_t(\rn)\right)^*=\begin{cases}
\left(\dot KE_{q',r'}^{-\alpha,p'}\right)_t(\rn),\quad 1<p<\infty;\\
\left(\dot KE_{q',r'}^{-\alpha,\infty}\right)_t(\rn),\quad 0<p\leq1.
\end{cases}
$$
\end{theorem}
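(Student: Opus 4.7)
The plan is to prove the two inclusions separately for each range of $p$, using H\"older's inequality for slice spaces from Lemma \ref{Holder} together with the known duality of the base slice space $((E_r^q)_t(\rn))^*=(E_{r'}^{q'})_t(\rn)$ from \cite{Duiou}. The guiding principle is that the $\ell^p$-weighted annular structure of the Herz-slice norm should dualize into an $\ell^{p'}$ (respectively $\ell^\fz$) structure carrying the sign-flipped weight $2^{-k\az}$.

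For the inclusion $\supset$, given $g$ in the claimed dual space, I would define $L_g(f):=\int_{\rn}fg$ and split the integral over the annuli to obtain
$$
|L_g(f)|\le\sum_{k\in\zz}\int_{S_k}|fg|\le\sum_{k\in\zz}\lf(2^{k\az}\lf\|f\mathbf1_{S_k}\r\|_{(E_r^q)_t(\rn)}\r)\lf(2^{-k\az}\lf\|g\mathbf1_{S_k}\r\|_{(E_{r'}^{q'})_t(\rn)}\r),
$$
where Lemma \ref{Holder} is applied on each $S_k$. For $1<p<\fz$ the outer sum is closed off by H\"older's inequality in $\ell^p/\ell^{p'}$; for $0<p\le1$ one pulls out the supremum of the second factor and bounds the remaining sum via the embedding $\ell^p\hookrightarrow\ell^1$, a direct consequence of formula (\ref{cp}).

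For the reverse inclusion, starting with $L\in((\dot KE_{q,r}^{\az,p})_t(\rn))^*$, I would, for each $k\in\zz$, restrict $L$ to the subspace of $(E_r^q)_t(\rn)$ consisting of functions supported in $S_k$; since $\|h\mathbf1_{S_k}\|_{(\dot KE_{q,r}^{\az,p})_t(\rn)}=2^{k\az}\|h\mathbf1_{S_k}\|_{(E_r^q)_t(\rn)}$, this restriction has norm at most $2^{k\az}\|L\|$. Extending to all of $(E_r^q)_t(\rn)$ by Hahn--Banach and invoking base-space duality yields a representative $\wt g_k$; truncating to $g_k:=\wt g_k\mathbf1_{S_k}$, which does not increase the norm since the slice norm is monotone under pointwise domination, gives $L(h\mathbf1_{S_k})=\int hg_k$ together with $\|g_k\|_{(E_{r'}^{q'})_t(\rn)}\le 2^{k\az}\|L\|$. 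The candidate dual element is $g:=\sum_{k\in\zz}g_k$, well-defined pointwise because the supports are disjoint.

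To control $\|g\|_{(\dot KE_{q',r'}^{-\az,p'})_t(\rn)}$ when $1<p<\fz$, I would test $L$ against normalized combinations $f=\sum_k c_k2^{-k\az}h_k$ with $h_k\mathbf1_{S_k}=h_k$, $\|h_k\|_{(E_r^q)_t(\rn)}\le1$, and $\sum_k|c_k|^p\le1$; a quick computation gives $\|f\|_{(\dot KE_{q,r}^{\az,p})_t(\rn)}\le1$, and the identity $L(f)=\sum_k c_k2^{-k\az}\int h_kg_k$ lets one optimize first over $h_k$ (to recover the $(E_{r'}^{q'})_t$-norm of each $g_k$) and then over $\{c_k\}$ (to convert $\ell^p$-duality into the $\ell^{p'}$-bound). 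For $0<p\le1$, the simpler test $f=2^{-k_0\az}h\mathbf1_{S_{k_0}}$ at a single annulus immediately gives $2^{-k_0\az}\|g_{k_0}\|_{(E_{r'}^{q'})_t(\rn)}\le\|L\|$, which is exactly the required $\ell^\fz$ bound. The main obstacle is the reliance on base-space duality $((E_r^q)_t(\rn))^*=(E_{r'}^{q'})_t(\rn)$, available in \cite{Duiou} but not proved here; once this is in hand, the rest is a standard dualization of the $\ell^p$-weighted annular structure, closely parallel to the classical Herz-space duality due to Lu and Yang.
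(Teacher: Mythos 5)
Your proposal is correct and follows essentially the same route as the paper: the inclusion $\supset$ via Lemma \ref{Holder} on each annulus plus H\"older (or $\ell^p\hookrightarrow\ell^1$) in the sum over $k$, and the reverse inclusion by restricting the functional to functions supported in $S_k$, invoking the slice-space duality $((E_r^q)_t(\rn))^*=(E_{r'}^{q'})_t(\rn)$ to produce representatives $g_k$ supported in $S_k$, and testing against near-extremal functions to recover the $\ell^{p'}$ (resp.\ $\ell^\infty$) bound. The only cosmetic differences are that you phrase the extremizer step as an optimization over coefficient sequences rather than the paper's $\varepsilon_k$-approximate maximizers summed over $-N\le k\le M$, and you spell out the $0<p\le1$ case which the paper omits as similar.
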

\begin{proof}
We first prove it for the case $1<p<\infty$. Let $T\in (\dot KE_{q',r'}^{-\alpha,p'})_t(\rn)$.
For any $f\in(\dot KE_{q,r}^{\alpha,p})_t(\rn)$, we define
$$
(T,f):=\int_\rn T(x)f(x)\,dx=\sum_{k=-\infty}^\infty\int_{S_k}T(x)f(x)\,dx.
$$
By Lemma \ref{Holder}, we have
$$
|(T,f)|\leq\left[\sum_{k=-\infty}^\infty2^{-k\alpha p'}\|T\|_{(E^{q'}_{r'})_t(S_k)}^{p'}\right]^\frac1{p'}
\left[\sum_{k=-\infty}^\infty2^{k\alpha p}\|f\|_{(E^{q}_{r})_t(S_k)}^p\right]^\frac1{p}
=\|T\|_{(\dot KE_{q',r'}^{-\alpha,p'})_t(\rn)}\|f\|_{(\dot KE_{q,r}^{\alpha,p})_t(\rn)}.
$$
This shows that $T\in((\dot KE_{q,r}^{\alpha,p})_t(\rn))^*$.

Let $T\in ((\dot KE_{q,r}^{\alpha,p})_t(\rn))^*$. For any $k\in\zz$ and $f_k\in (E_r^q)_t(\rn)$, we define
$\widetilde{f_k}:= f_k\mathbf1_{S_k}$, then $\widetilde f_k\in (\dot KE_{q,r}^{\alpha,p})_t(\rn)$ and
$\lf\|\widetilde f_k\r\|_{(\dot KE_{q,r}^{\alpha,p})_t(\rn)}=2^{k\alpha}\|f_k\|_{(E_r^q)_t(\rn)}$.
Let $(T_k,f_k):=(T,\widetilde f_k)$. It is easy to check that $T_k\in (E_{r'}^{q'})_t(\rn)$ and $\lf\|T_k\r\|_{(E_{r'}^{q'})_t(\rn)}\leq2^{k\alpha}\|T\|_{((\dot KE_{q,r}^{\alpha,p})_t(\rn))^*}$.
Now, for any $f\in(\dot KE_{q,r}^{\alpha,p})_t(\rn)$, we have $f\mathbf1_{S_k}\in (E_r^q)_t(\rn)$.
Then, for any given $N,M\in\mathbb N$,
$$
\sum_{k=-N}^M(T_k,f\mathbf1_{S_k})=\left(T,\sum_{k=-N}^Mf\mathbf1_{S_k}\right).
$$
For any $k\in\zz$, choose $f_k'\in(E_r^q)_t(\rn)$ with $\supp (f_k)\subset S_k$ and $\lf\|f_k'\r\|_{(E_r^q)_t(\rn)}=2^{-k\alpha}$ such that
$$
\left(T_k,f_k'\r)\geq 2^{-k\alpha}\lf\|T_k\r\|_{(E_{r'}^{q'})_t(\rn)}-\varepsilon_k,
$$
where $\varepsilon_k>0$ is determined later. Let
$$
f_k:=\left(2^{-k\alpha}\|T_k\|_{(E_{r'}^{q'})_t(\rn)}\right)^{p'-1}f_k'.
$$
For any given $\varepsilon\in(0,\infty)$, select $\varepsilon_k>0$ small enough such that
$$
(T_k,f_k)+ 2^{-|k|}\varepsilon\geq2^{-k\alpha p'}\|T_k\|_{(E_{r'}^{q'})_t(\rn)}^{p'}=2^{k\alpha p}\lf\|f_k\r\|_{(E_r^q)_t(\rn)}^p.
$$
Then we obtain that
\begin{align*}
\sum_{k=-N}^M2^{-k\alpha p'}\lf\|T_k\mathbf1_{S_k}\r\|_{(E_{r'}^{q'})_t(\rn)}^{p'}&\leq 4\varepsilon+\left(T,\sum_{k=-N}^Mf_k\mathbf1_{S_k}\right)\\
&\leq 4\varepsilon+\|T\|_{((\dot KE_{q,r}^{\alpha,p})_t(\rn))^*}\left\|\sum_{k=-N}^Mf_k\right\|_{(\dot KE_{q,r}^{\alpha,p})_t(\rn)}\\
&\leq 4\varepsilon+\|T\|_{((\dot KE_{q,r}^{\alpha,p})_t(\rn))^*}\lf(\sum_{k=-N}^M2^{k\alpha p}\lf\|f_k\mathbf1_{S_k}\r\|_{(E_r^q)_t(\rn)}^p\r)^\frac1p\\
&=4\varepsilon+\|T\|_{((\dot KE_{q,r}^{\alpha,p})_t(\rn))^*}\lf(\sum_{k=-N}^M2^{-k\alpha p'}\lf\|T_k\mathbf1_{S_k}\r\|_{(E_{r'}^{q'})_t(\rn)}^{p'}\r)^\frac1p.
\end{align*}
Letting $\varepsilon\rightarrow0$ and $N,M\rightarrow\infty$, we have
\begin{equation}\label{Eqri}
\left[\sum_{k=-\infty}^\infty2^{-k\alpha p'}\lf\|T_k\mathbf1_{S_k}\r\|_{(E_{r'}^{q'})_t(\rn)}^{p'}\right]^\frac1{p'}\leq \lf\|T\r\|_{((\dot KE_{q,r}^{\alpha,p})_t(\rn))^*}.
\end{equation}
Then we define
$$
\widetilde T(x):=\sum_{k=-\infty}^\infty T_k(x)\mathbf1_{S_k}(x).
$$
It follows from \eqref{Eqri} that $\widetilde T\in (\dot KE_{q',r'}^{-\alpha,p'})_t(\rn)$. Moreover, for any $f\in(\dot KE_{q,r}^{\alpha,p})_t(\rn)$,
it can see that
$$
\lf(\widetilde T,f\r)=\sum_{k=-\infty}^\infty T_k(x)\mathbf1_{S_k}(x)f(x)\,dx
=\sum_{k=-\infty}^\infty\int_{S_k}T_k(x)f(x)\,dx=\sum_{k=-\infty}^\infty(T,f\mathbf1_{S_k})=(T,f).
$$
This shows that $T$ is also in $(\dot KE_{q',r'}^{-\alpha,p'})_t(\rn)$. Hence we prove the result for the case $1<p<\infty$.
For the case $0<p\leq1$, the proof is similar and omitted.
\end{proof}
By the closed-graph theorem, we easily get the following corollary.
\begin{corollary}
Let $\alpha\in\rr$, $t\in(0,\infty)$, $p,q\in[1,\infty)$, $r\in(1,\infty)$ and $1/p+1/p'=1/r+1/r'=1/q+1/q'=1$.
Then $f\in(\dot KE_{q,r}^{\alpha,p})_t(\rn)$ if and only if
$$
\int_\rn f(x)g(x)\,dx<\fz,
$$
where $g\in (\dot KE_{q',r'}^{-\alpha,p'})_t(\rn)$, and
$$
\lf\|f\r\|_{(\dot KE_{q,r}^{\alpha,p})_t(\rn)}=
\sup \lf\{\int_\rn f(x)g(x)\,dx:\lf\|g\r\|_{(\dot KE_{q',r'}^{-\az,p'})_t(\rn)}\leq 1\r\}.
$$
\end{corollary}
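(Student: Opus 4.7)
The plan is to identify $f$ with the bounded linear functional $L_f(g):=\int_{\rn}f\,g$ on the Banach space $X:=(\dot KE_{q',r'}^{-\alpha,p'})_t(\rn)$, and then invoke Theorem \ref{KDuiO} applied to the parameters $(q',r',-\alpha,p')$: since the assumptions $p,q\in[1,\infty)$ force $p',q'\in(1,\infty]$ (with obvious interpretation when $p=1$ or $q=1$, matching the two cases in Theorem \ref{KDuiO}), the dual $X^{*}$ is precisely $(\dot KE_{q,r}^{\alpha,p})_t(\rn)$. Once $L_f\in X^{*}$ is established, the identification inside the duality produces both $f\in(\dot KE_{q,r}^{\alpha,p})_t(\rn)$ and the full norm identity.

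For the necessity and the inequality $\sup\{\int fg:\|g\|\le1\}\le\|f\|$, assume $f\in(\dot KE_{q,r}^{\alpha,p})_t(\rn)$ and apply Lemma \ref{Holder} on each annulus $S_{k}$ to obtain
\[
\int_{S_{k}}|f(x)g(x)|\,dx\le\|f\mathbf1_{S_{k}}\|_{(E_{r}^{q})_{t}(\rn)}\,\|g\mathbf1_{S_{k}}\|_{(E_{r'}^{q'})_{t}(\rn)}.
\]
Multiplying and dividing the $k$th summand by $2^{k\alpha}$ and applying the discrete H\"older inequality with conjugate exponents $p$ and $p'$ (with the usual adjustment when $p=1$) yields $\int_{\rn}|fg|\le\|f\|_{(\dot KE_{q,r}^{\alpha,p})_t(\rn)}\|g\|_{(\dot KE_{q',r'}^{-\alpha,p'})_t(\rn)}$, which finishes this direction.

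For sufficiency, assume $\int fg$ is finite for every $g\in X$. Then $L_{f}$ is a linear functional on $X$, and the hypothesis of the closed-graph theorem is verified as follows: if $g_{n}\to g$ in $X$ and $L_{f}(g_{n})\to c$, then $\|(g_n-g)\mathbf1_{S_k}\|_{(E_{r'}^{q'})_t}\to0$ for each $k$, so along a subsequence $g_{n}\to g$ almost everywhere; combining this with the dominated-convergence estimate obtained by splitting on the annuli and bounding $|f\,g_{n}|$ via $|f|\cdot\sup_{n}|g_{n}|$ (the supremum is controlled annulus-wise by the uniform bound on $\|g_{n}\|_{X}$ together with the H\"older estimate from Step 1) forces $L_{f}(g)=c$. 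Hence $L_{f}\in X^{*}$, and by Theorem \ref{KDuiO} there exists $\tilde f\in(\dot KE_{q,r}^{\alpha,p})_t(\rn)$ representing $L_{f}$; testing against suitable $g$ supported in a single $S_{k}$ shows $\tilde f=f$ almost everywhere, yielding $f\in(\dot KE_{q,r}^{\alpha,p})_t(\rn)$ and the reverse inequality $\|f\|\le\sup\{\int fg:\|g\|\le1\}$.

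The main obstacle will be the verification of the closed-graph hypothesis: the mere finiteness of $L_{f}(g)$ for every $g$ does not instantly give continuity, and one has to combine the annulus-wise slice-space convergence with a dominated-convergence argument to pass the limit inside the integral. Everything else is bookkeeping with H\"older-type inequalities and a direct invocation of Theorem \ref{KDuiO} in its dual form.
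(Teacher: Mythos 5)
Your overall route (annulus-wise H\"older for the easy direction; closed graph plus the duality Theorem \ref{KDuiO} for the converse) is exactly what the paper intends, since its entire proof is the sentence ``by the closed-graph theorem''. However, your verification of the closed-graph hypothesis does not work as written. From the uniform bound $\sup_n\|g_n\|_{X}<\infty$ you cannot control the pointwise supremum $\sup_n|g_n(x)|$: bounded norms do not prevent $\sup_n|g_n|$ from failing to belong to $X=(\dot KE_{q',r'}^{-\alpha,p'})_t(\rn)$ (think of increasingly tall, thin spikes), and the H\"older estimate of your first step then gives no integrable majorant for $|fg_n|$, so the dominated convergence step collapses. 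The standard repair is the Riesz--Fischer trick: pass to a subsequence with $\sum_k\|g_{n_k}-g\|_{X}<\infty$, set $h:=|g|+\sum_k|g_{n_k}-g|$, observe $h\in X$ by completeness of $X$, hence $|f|h\in L^1(\rn)$ by the hypothesis applied to the \emph{new} test function $h$, and only then dominate $|fg_{n_k}|\le|f|h$ to close the graph. (Alternatively, avoid the closed graph theorem: truncate to $f\mathbf 1_{\{2^{-N}\le|x|\le 2^N,\,|f|\le N\}}$, which gives bounded functionals $T_N$ with $\sup_N|T_N(g)|\le\int_\rn|fg|\,dx<\infty$, apply uniform boundedness and monotone convergence.)

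The second gap is the inversion of the duality. Theorem \ref{KDuiO} is stated for primal exponents $q\in[1,\infty)$ and $p\in(0,\infty)$, so applying it with parameters $(q',r',-\alpha,p')$ is legitimate only when $p,q>1$; for $p=1$ or $q=1$, which the corollary allows, you would need the dual of a space with exponent $p'=\infty$ or $q'=\infty$, a case the theorem does not treat, and in general the dual of the $\infty$-exponent space is strictly larger than the $1$-exponent space, so ``$X^{*}$ is precisely $(\dot KE_{q,r}^{\alpha,p})_t(\rn)$'' is not available there. What you actually need is the weaker implication: if $L_f$ is bounded on $(\dot KE_{q',r'}^{-\alpha,p'})_t(\rn)$, then $f\in(\dot KE_{q,r}^{\alpha,p})_t(\rn)$ with $\|f\|_{(\dot KE_{q,r}^{\alpha,p})_t(\rn)}\le\|L_f\|$. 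This should be proved directly by repeating, annulus by annulus, the norming-function (converse H\"older) argument from the second half of the proof of Theorem \ref{KDuiO}: choose $g_k$ supported in $S_k$ with $\|g_k\|_{(E_{r'}^{q'})_t(\rn)}$ normalized so that $\int_{S_k}fg_k$ nearly attains $\|f\mathbf 1_{S_k}\|_{(E_r^q)_t(\rn)}$, and sum with the weights $2^{k\alpha}$; this works for all $p,q\in[1,\infty)$ and also yields the stated norm identity. With these two repairs your argument is complete and coincides in spirit with the paper's intended proof.
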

A similar result for the non-homogeneous Herz-slice space is given as follows, and hence the details are omitted.
\begin{theorem}\label{oKDuiO}
Let $\alpha\in\rr$, $t,\,p\in(0,\infty)$, $r\in(1,\infty)$ and $q\in[1,\infty)$. The dual space of
$(KE_{q,r}^{\alpha,p})_t(\rn)$ is
$$
\left(\left(KE_{q,r}^{\alpha,p}\right)_t(\rn)\right)^*=\begin{cases}
\left(KE_{q',r'}^{-\alpha,p'}\right)_t(\rn),\quad 1<p<\infty;\\
\left(KE_{q',r'}^{-\alpha,\infty}\right)_t(\rn),\quad 0<p\leq1.
\end{cases}
$$
Furthermore, if $\alpha\in\rr$, $t\in(0,\infty)$, $p,q\in[1,\infty)$, $r\in(1,\infty)$ and $1/p+1/p'=1/r+1/r'=1/q+1/q'=1$.
Then $f\in(KE_{q,r}^{\alpha,p})_t(\rn)$ if and only if
$$
\lf|\int_\rn f(x)g(x)\,dx\r|<\fz,
$$
where $g\in (KE_{q',r'}^{-\alpha,p'})_t(\rn)$, and
$$
\lf\|f\r\|_{(KE_{q,r}^{\alpha,p})_t(\rn)}=
\sup \lf\{\lf|\int_\rn f(x)g(x)\,dx\r|:\lf\|g\r\|_{(KE_{q',r'}^{-\az,p'})_t(\rn)}\leq 1\r\}.
$$
\end{theorem}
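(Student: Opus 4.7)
The plan is to mirror the argument given for Theorem \ref{KDuiO} almost verbatim, with the only change being that the summation index runs over $k \in \{0,1,2,\ldots\}$ rather than over $\zz$. Since all the machinery used in the homogeneous case (Lemma \ref{Holder}, the slice-space duality $((E_r^q)_t(\rn))^* = (E_{r'}^{q'})_t(\rn)$, and the near-extremal construction of test functions on each annulus $S_k$) is insensitive to whether the indices are allowed to be negative, the non-homogeneous case follows by a direct transcription. I would record the proof in two halves, treating $1<p<\infty$ first and then commenting on $0<p\leq 1$.

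For the easy inclusion, given $T \in (KE_{q',r'}^{-\alpha,p'})_t(\rn)$, I would define the pairing
\[
(T,f) := \int_\rn T(x)f(x)\,dx = \sum_{k=0}^{\infty}\int_{S_k} T(x)f(x)\,dx,
\]
apply Lemma \ref{Holder} annulus by annulus, and then invoke the discrete H\"older inequality with exponents $p'$ and $p$ to conclude $|(T,f)| \leq \|T\|_{(KE_{q',r'}^{-\alpha,p'})_t(\rn)}\|f\|_{(KE_{q,r}^{\alpha,p})_t(\rn)}$. For the reverse inclusion, given $T \in ((KE_{q,r}^{\alpha,p})_t(\rn))^*$, for every $k\geq 0$ and every $f_k \in (E_r^q)_t(\rn)$ set $\widetilde{f_k} := f_k \mathbf 1_{S_k}$; since $\widetilde{f_k}\in (KE_{q,r}^{\alpha,p})_t(\rn)$ with norm $2^{k\alpha}\|f_k\|_{(E_r^q)_t(\rn)}$, the formula $(T_k,f_k):=(T,\widetilde{f_k})$ defines a functional $T_k \in (E_{r'}^{q'})_t(\rn)$ satisfying $\|T_k\|_{(E_{r'}^{q'})_t(\rn)} \leq 2^{k\alpha}\|T\|_*$.

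To upgrade to the $\ell^{p'}$ bound, I would, for each $k\geq 0$, select a near-extremal $f_k' \in (E_r^q)_t(\rn)$ with $\supp(f_k') \subset S_k$ and $\|f_k'\|_{(E_r^q)_t(\rn)} = 2^{-k\alpha}$ so that $(T_k,f_k') \geq 2^{-k\alpha}\|T_k\|_{(E_{r'}^{q'})_t(\rn)} - \varepsilon_k$, rescale by $f_k := (2^{-k\alpha}\|T_k\|_{(E_{r'}^{q'})_t(\rn)})^{p'-1}f_k'$, and apply $T$ to the truncated sum $\sum_{k=0}^{M} f_k$. Choosing $\varepsilon_k$ summable and small and then sending $M\to\infty$ yields
\[
\lf[\sum_{k=0}^{\infty} 2^{-k\alpha p'} \lf\|T_k \mathbf 1_{S_k}\r\|_{(E_{r'}^{q'})_t(\rn)}^{p'}\r]^{1/p'} \leq \|T\|_{((KE_{q,r}^{\alpha,p})_t(\rn))^*}.
\]
Then $\widetilde T(x) := \sum_{k=0}^{\infty}T_k(x)\mathbf 1_{S_k}(x)$ belongs to $(KE_{q',r'}^{-\alpha,p'})_t(\rn)$ and represents $T$ via the paring check $\sum_{k=0}^{\infty}(T,f\mathbf 1_{S_k}) = (T,f)$.

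For $0<p\leq 1$, the sequence-space bound becomes an $\ell^\infty$ bound, obtained immediately from $\|T_k\|_{(E_{r'}^{q'})_t(\rn)} \leq 2^{k\alpha}\|T\|_*$ with no extremizing step required, matching the definition of $(KE_{q',r'}^{-\alpha,\infty})_t(\rn)$. The final characterization via the supremum formula is then a direct consequence of the dual identification by pairing $f$ with unit-norm $g$ in the dual. I do not anticipate a genuine obstacle: the only slightly delicate point is the near-extremal selection and the $\varepsilon$-bookkeeping in the norm estimate, which is already handled in the homogeneous case, and the restriction $k\geq 0$ actually removes the small-scale summation concerns that require care in Theorem \ref{KDuiO}.
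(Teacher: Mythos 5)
Your proposal is correct and takes essentially the same route as the paper, which proves the homogeneous case (Theorem \ref{KDuiO}) in detail and explicitly states that the non-homogeneous theorem follows by the same argument with details omitted --- precisely the transcription over $k\geq 0$ that you carry out, including the near-extremal construction for $1<p<\infty$ and the direct $\ell^\infty$ bound for $0<p\leq 1$. The only point you gloss is the final ``if and only if'' characterization, whose converse direction the paper derives from the closed-graph theorem rather than directly from the dual identification, but this matches the paper's own level of detail.
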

\section{Maximal functions on Herz-slice spaces}\label{M}
In this section, we obtain the bounds for the Hardy--Littlewood maximal operator over Herz-slice spaces.
We begin with the definition of the Hardy--Littlewood maximal operator.

For a locllay integrable function $f$, the Hardy--Littlewood maximal operator is defined by setting,
for almost every $x\in\rn$,
$$
Mf(x):=\sup_{r>0}\frac1{|B(x,r)|}\int_{B(x,r)}|f(y)|\,dy.
$$
\begin{theorem}\label{ThMf}
Let $t,\,p\in(0,\infty)$, $q\in[1,\infty)$, $r\in(1,\infty)$ and $-n/q<\alpha<n(1-1/q)$.
Then the Hardy--Littlewood maximal function is bounded on $(\dot KE_{q,r}^{\alpha,p})_t(\rn)$.
\end{theorem}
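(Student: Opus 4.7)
The plan is to adapt the classical annular decomposition argument for the Hardy--Littlewood maximal operator on Herz spaces to the slice setting, with the key input being that $M$ is already known to be bounded on the slice space $(E_r^q)_t(\rn)$ for $1<r<\infty$ and $1 \le q < \infty$ (this is part of the Auscher--Prisuelos-Arribas framework cited in Section \ref{s1}). Given $f \in (\dot KE_{q,r}^{\alpha,p})_t(\rn)$, I would first decompose $f = \sum_{j \in \zz} f_j$ with $f_j := f\mathbf{1}_{S_j}$, so that $Mf \le \sum_j Mf_j$ pointwise, and then, for each $k \in \zz$, split
\begin{align*}
\lf\|Mf\mathbf{1}_{S_k}\r\|_{(E_r^q)_t(\rn)} &\le \sum_{j \le k-2}\lf\|Mf_j\mathbf{1}_{S_k}\r\|_{(E_r^q)_t(\rn)} + \sum_{j=k-1}^{k+1}\lf\|Mf_j\mathbf{1}_{S_k}\r\|_{(E_r^q)_t(\rn)} \\
&\quad + \sum_{j \ge k+2}\lf\|Mf_j\mathbf{1}_{S_k}\r\|_{(E_r^q)_t(\rn)} =: \mathrm{I}_k + \mathrm{II}_k + \mathrm{III}_k.
\end{align*}

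The middle term $\mathrm{II}_k$ would be handled directly by the boundedness of $M$ on $(E_r^q)_t(\rn)$, yielding $\mathrm{II}_k \lesssim \sum_{j=k-1}^{k+1}\|f_j\|_{(E_r^q)_t(\rn)}$. For $\mathrm{I}_k$ and $\mathrm{III}_k$, the geometric separation between $S_k$ and $S_j$ forces any ball centered at $x \in S_k$ that meets $\supp(f_j)$ to have radius of order at least $2^{\max(j,k)}$, giving the pointwise decay $Mf_j(x) \lesssim 2^{-n\max(j,k)}\|f_j\|_{L^1(\rn)}$. Combining this with the H\"older estimate $\|f_j\|_{L^1(\rn)} \lesssim \|f_j\|_{(E_r^q)_t(\rn)}\|\mathbf{1}_{S_j}\|_{(E_{r'}^{q'})_t(\rn)}$ from Lemma \ref{Holder}, together with the size estimate $\|\mathbf{1}_{S_\ell}\|_{(E_s^u)_t(\rn)} \sim 2^{\ell n/u}$ valid for $2^\ell \gtrsim t$, I would arrive at $\|Mf_j\mathbf{1}_{S_k}\|_{(E_r^q)_t(\rn)} \lesssim 2^{(j-k)n/q'}\|f_j\|_{(E_r^q)_t(\rn)}$ for $j \le k-2$ and the symmetric bound $\lesssim 2^{(k-j)n/q}\|f_j\|_{(E_r^q)_t(\rn)}$ for $j \ge k+2$.

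Setting $a_j := \|f_j\|_{(E_r^q)_t(\rn)}$ and substituting these bounds into the Herz-slice norm reduces the theorem to a weighted convolution inequality on $\zz$ whose kernel decays like $2^{-|m|n/q'}$ on one side and $2^{-|m|n/q}$ on the other. Under the hypothesis $-n/q < \alpha < n(1-1/q) = n/q'$, both tails strictly dominate the factor $2^{m\alpha}$, so the convolution estimate follows from H\"older's inequality applied to the inner sum when $1 \le p < \infty$ and from the $\ell^p$-subadditivity \eqref{cp} when $0 < p \le 1$; in both cases one reindexes via $c_j := 2^{j\alpha}a_j$ and sums the resulting geometric series in $k-j$. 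The main obstacle I expect is the two-regime behavior of $\|\mathbf{1}_{S_\ell}\|_{(E_s^u)_t(\rn)}$: the clean bound $\sim 2^{\ell n/u}$ only holds for $2^\ell \gtrsim t$, while for the finitely many small annuli $2^\ell \lesssim t$ the norm scales as $2^{\ell n/s}t^{n/u-n/s}$ instead. One must verify that these small-scale annuli contribute only an $O(1)$ error and that the matching of decay exponents responsible for summability against $2^{m\alpha}$ takes place entirely in the large-scale regime, where the Lebesgue-like scaling is in force.
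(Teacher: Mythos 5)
Your plan follows essentially the same route as the paper's proof for $q\in(1,\infty)$: decompose $f=\sum_j f\mathbf{1}_{S_j}$, split each $\|Mf\mathbf{1}_{S_k}\|_{(E_r^q)_t(\rn)}$ into the three ranges $j\le k-2$, $|j-k|\le1$, $j\ge k+2$, treat the middle range by the slice-space boundedness of $M$, and treat the far ranges by the pointwise bound $Mf_j(x)\lesssim 2^{-n\max(j,k)}\|f_j\|_{L^1(\rn)}$ combined with the slice H\"older inequality (Lemma \ref{Holder}) and the size estimate for characteristic functions, ending with the same geometric summation in $k-j$ that uses $-n/q<\alpha<n(1-1/q)$, via \eqref{cp} for $p\le1$ and H\"older for $p>1$. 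Your decay exponents $2^{(j-k)n/q'}$ and $2^{(k-j)n/q}$ match the paper's.

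There are, however, two genuine issues. First, your key input is overstated: $M$ is bounded on $(E_r^q)_t(\rn)$ only for $q\in(1,\infty)$; at the endpoint $q=1$, which the theorem includes, only the weak-type bound $(E_r^1)_t(\rn)\to W(E_r^1)_t(\rn)$ of Lemma \ref{ProMf} is available, so your treatment of the near-diagonal term $\mathrm{II}_k$ collapses there. The paper runs a separate argument for $q=1$, splitting the level sets of $Mf$ and applying the weak-type bound to $M\bigl(\sum_{l=k-2}^{k+2}f_l\bigr)$, and in that case it only reaches a weak Herz-slice estimate; your proposal would need this extra endpoint argument. Second, the small-annuli issue you flag is left unverified, whereas it is exactly the point the paper settles (rather brusquely) through Lemma \ref{CF} and the remark asserting $\|\mathbf{1}_{S_k}\|_{(E_r^q)_t(\rn)}\le C2^{kn/q}$ for every $k\in\zz$. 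Your own scaling $\|\mathbf{1}_{S_\ell}\|_{(E_r^q)_t(\rn)}\sim 2^{\ell n/r}t^{n/q-n/r}$ for $2^\ell\lesssim t$ exceeds $2^{\ell n/q}$ precisely when $r>q$, and since infinitely many annuli lie in that regime (as $k\to-\infty$), the hoped-for ``$O(1)$ error'' is not automatic: the deferred verification is a real gap in your write-up, not a routine check, and it is the one place where your argument and the paper's diverge in substance.
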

\begin{lemma}\cite{AP,LWZ}\label{ProMf}
Let $t\in(0,\fz)$, $r\in (1,\fz)$.

(1) If $1<q<\fz$, the Hardy--Littlewood maximal function $M$ is bounded on $(E_r^q)_t(\rn)$.

(2) If $q=1$, $M$ is bounded from $(E_r^q)_t(\rn)$ to $W(E_r^q)_t(\rn)$.
\end{lemma}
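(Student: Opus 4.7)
\textbf{Proof plan for Theorem \ref{ThMf}.}

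The plan is to adapt the classical annular-decomposition argument for the Hardy--Littlewood maximal operator on Herz spaces to the Herz-slice setting, combining Lemma \ref{ProMf} for the ``local'' contribution with Lemma \ref{Holder} for the off-diagonal contributions. Writing $f=\sum_{k\in\zz} f_k$ with $f_k:=f\mathbf 1_{S_k}$, I use $Mf\le\sum_{k} M(f_k)$ pointwise and, for each $j\in\zz$, split
$$
\lf\|(Mf)\mathbf 1_{S_j}\r\|_{(E_r^q)_t(\rn)}\le \mathrm I_j+\mathrm{II}_j+\mathrm{III}_j,
$$
where $\mathrm I_j$, $\mathrm{II}_j$, $\mathrm{III}_j$ collect the contributions from $k\le j-2$, from $k\in\{j-1,j,j+1\}$, and from $k\ge j+2$, respectively. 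The goal is then to control each piece by a geometrically decaying weighted sum in $k$ and sum in $j$ against the weight $2^{j\az p}$.

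For the diagonal piece $\mathrm{II}_j$, Lemma \ref{ProMf}(1) gives $\|M(f_k)\mathbf 1_{S_j}\|_{(E_r^q)_t}\le\|M(f_k)\|_{(E_r^q)_t}\lesssim\|f_k\|_{(E_r^q)_t}$ for $q\in(1,\fz)$, so $\mathrm{II}_j\lesssim\sum_{|k-j|\le 1}\|f_k\|_{(E_r^q)_t}$. The endpoint $q=1$ would require a separate argument based on the weak-type statement in Lemma \ref{ProMf}(2) combined with a level-set truncation, which still closes because only $\az<0$ is allowed there. For the off-diagonal pieces, the geometric separation of $S_j$ and $S_k$ when $|j-k|\ge 2$ forces any averaging ball to have radius $\gtrsim 2^{\max(j,k)}$, so that for $x\in S_j$
$$
M(f_k)(x)\lesssim 2^{-jn}\|f_k\|_{L^1(\rn)}\ (k\le j-2),\qquad M(f_k)(x)\lesssim 2^{-kn}\|f_k\|_{L^1(\rn)}\ (k\ge j+2).
$$
Using Lemma \ref{Holder} in the form $\|f_k\|_{L^1(\rn)}\le\|f_k\|_{(E_r^q)_t}\|\mathbf 1_{S_k}\|_{(E_{r'}^{q'})_t}$ together with the scaling $\|\mathbf 1_{S_m}\|_{(E_r^q)_t}\sim 2^{mn/q}$ (whose proof is sketched below) produces
$$
\lf\|M(f_k)\mathbf 1_{S_j}\r\|_{(E_r^q)_t}\lesssim 2^{(k-j)n/q'}\|f_k\|_{(E_r^q)_t}\ (k\le j-2),
$$
$$
\lf\|M(f_k)\mathbf 1_{S_j}\r\|_{(E_r^q)_t}\lesssim 2^{(j-k)n/q}\|f_k\|_{(E_r^q)_t}\ (k\ge j+2).
$$

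Multiplying by $2^{j\az}$ rewrites the off-diagonal bounds as a discrete convolution of the sequence $(2^{k\az}\|f_k\|_{(E_r^q)_t})_k$ against a kernel decaying like $2^{-|j-k|(n/q'-\az)}$ on one side and $2^{-|j-k|(n/q+\az)}$ on the other; both rates are positive precisely under the hypothesis $-n/q<\az<n(1-1/q)$. Taking $\ell^p$ norms in $j$ and applying Young's inequality for discrete convolutions (when $p\ge 1$) or the $\ell^p$-subadditivity \eqref{cp} (when $0<p<1$) reduces matters to summing geometric series and yields $\|Mf\|_{(\dot KE_{q,r}^{\az,p})_t}\lesssim\|f\|_{(\dot KE_{q,r}^{\az,p})_t}$. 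I expect the main obstacle to be twofold: first, justifying the uniform scaling $\|\mathbf 1_{S_m}\|_{(E_r^q)_t}\sim 2^{mn/q}$---this estimate is clean when $2^m\gtrsim t$, but for $2^m\ll t$ the slice-space norm of $\mathbf 1_{S_m}$ behaves instead like $2^{mn/r}t^{n/q-n/r}$, and these finitely many ``small-$m$'' annuli must be absorbed into a $t$-dependent constant; and second, handling the diagonal term at the endpoint $q=1$, where Lemma \ref{ProMf} only provides a weak-type bound and a Kolmogorov-style reduction, using the strict inequality $\az<0$, is needed to recover the strong-type conclusion.
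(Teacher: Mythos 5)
Your proposal does not address the statement it was asked to prove. The statement is Lemma \ref{ProMf}: the boundedness of the Hardy--Littlewood maximal operator $M$ on the slice space $(E_r^q)_t(\rn)$ for $1<q<\infty$, and the weak-type endpoint at $q=1$. What you have written is a proof plan for Theorem \ref{ThMf}, the boundedness of $M$ on the Herz-slice space $(\dot KE_{q,r}^{\alpha,p})_t(\rn)$ --- and your plan explicitly invokes Lemma \ref{ProMf}(1) and (2) to handle the diagonal piece $\mathrm{II}_j$. As an argument for the lemma itself this is circular: you are assuming exactly the estimate you are supposed to establish. (In the paper the lemma carries no proof; it is quoted from Auscher--Prisuelos-Arribas \cite{AP} and \cite{LWZ}, so the relevant comparison is with the arguments there, not with the proof of Theorem \ref{ThMf}.)

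A genuine proof of the lemma has to work at the level of the slice norm
$\|f\|_{(E_r^q)_t}=\bigl\|\bigl(\fint_{B(\cdot,t)}|f|^r\bigr)^{1/r}\bigr\|_{L^q}$
directly. The standard routes are: (i) the extrapolation approach of \cite{AP}, which realizes $(E_r^q)_t$ as a space on which weighted $L^r(w)$ estimates for $w\in A_1$ or $A_{r}$ can be extrapolated, so the $L^r(w)$ boundedness of $M$ transfers; or (ii) a direct amalgam argument: for $x\in\rn$ split $f=f\mathbf 1_{B(x,Ct)}+f\mathbf 1_{B(x,Ct)^c}$, bound $\fint_{B(x,t)}|M(f\mathbf 1_{B(x,Ct)})|^r$ by the $L^r$ boundedness of $M$, bound the far part pointwise by a discretized maximal average of the function $y\mapsto(\fint_{B(y,t)}|f|^r)^{1/r}$, and then apply the $L^q$ boundedness of $M$ (or its weak-type $(1,1)$ bound when $q=1$) to that function. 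Neither of these ideas appears in your proposal, so as a proof of Lemma \ref{ProMf} it has a fatal gap: the key content is simply assumed. The annular decomposition, the use of Lemma \ref{Holder}, and the size estimate $\|\mathbf 1_{S_m}\|_{(E_r^q)_t}\lesssim 2^{mn/q}$ that you develop are all appropriate for Theorem \ref{ThMf} (and indeed match the paper's proof of that theorem quite closely, including your caveat about small annuli versus $t$), but they are not a substitute for proving the slice-space boundedness itself.
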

\begin{lemma}\label{CF}
Let $t\in(0,\fz)$, $r,\,q\in(1,\fz)$, for all $x_0\in\rn$, $1<r_0<\fz$, a characteristic function
on $B(x_0,r_0)$ satisfies
$$
\lf\|\mathbf1_{B(x_0,r_0)}\r\|_{(E_r^q)_t(\rn)}\leq Cr_0^{n/q}.
$$
\end{lemma}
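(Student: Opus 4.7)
The plan is to work directly from the definition of the slice-space norm
$$
\|f\|_{(E_r^q)_t(\rn)}=\left\|\left(\frac{1}{|B(\cdot,t)|}\int_{B(\cdot,t)}|f(y)|^r\,dy\right)^{1/r}\right\|_{L^q(\rn)}
$$
applied to $f=\mathbf 1_{B(x_0,r_0)}$. First I would observe that
$$
\left(\frac{1}{|B(x,t)|}\int_{B(x,t)}\mathbf 1_{B(x_0,r_0)}(y)\,dy\right)^{1/r}
=\left(\frac{|B(x,t)\cap B(x_0,r_0)|}{|B(x,t)|}\right)^{1/r},
$$
which is trivially bounded above by $1$ and, crucially, vanishes whenever the two balls are disjoint, i.e.\ whenever $|x-x_0|>t+r_0$.

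Next, I would integrate in the $L^q$ variable over this effective support:
$$
\|\mathbf 1_{B(x_0,r_0)}\|_{(E_r^q)_t(\rn)}
\leq\left(\int_{B(x_0,t+r_0)}1\,dx\right)^{1/q}
= c_n\,(t+r_0)^{n/q},
$$
where $c_n$ depends only on the dimension. The remaining step is to convert $(t+r_0)^{n/q}$ into a multiple of $r_0^{n/q}$; this is where the hypothesis $r_0>1$ enters. Since $r_0\geq 1$, one has $t+r_0\leq (t+1)r_0$, and therefore
$$
(t+r_0)^{n/q}\leq (t+1)^{n/q}\,r_0^{n/q}.
$$
Absorbing $c_n(t+1)^{n/q}$ into the constant $C=C(n,q,t)$ yields the claimed inequality.

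There is no real obstacle here; the only delicate point is keeping track of which parameters the implicit constant may depend on, and in particular noticing that the $t$-dependence is legitimately swept into $C$ (since the lemma fixes $t$), while uniformity in $x_0$ and $r_0$ follows automatically from translation invariance of Lebesgue measure and the use of $r_0>1$ in the final estimate. If one wanted a version for $0<r_0\leq 1$, the inequality would have to be replaced by $C(t+1)^{n/q}$ on the right-hand side, but that range is explicitly excluded from the hypothesis.
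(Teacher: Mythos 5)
Your argument is correct, and it takes a genuinely different (and more elementary) route than the paper. You work straight from the definition: the averaged quantity $\bigl(|B(x,t)\cap B(x_0,r_0)|/|B(x,t)|\bigr)^{1/r}$ is at most $1$ and vanishes off $B(x_0,t+r_0)$, so the $L^q$-norm is at most $|B(x_0,t+r_0)|^{1/q}\lesssim (t+r_0)^{n/q}$, and the hypothesis $r_0>1$ then gives $t+r_0\le (t+1)r_0$, yielding $C(n,q,t)\,r_0^{n/q}$. The paper instead first reduces to $x_0=0$ by the translation invariance of the slice norm, then covers $B(0,r_0)$ by roughly $r_0^n$ unit balls and uses the power identity $\|\mathbf 1_{B}\|_{(E_r^q)_t(\rn)}=\|\mathbf 1_{B}\|_{(E_{r/q}^1)_t(\rn)}^{1/q}$ together with subadditivity of the $(E_{r/q}^1)_t(\rn)$ norm to get $\lesssim r_0^{n/q}\|\mathbf 1_{B(0,1)}\|_{(E_r^q)_t(\rn)}$. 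Your computation buys an explicit constant $c_n(t+1)^{n/q}$, avoids the auxiliary space $(E_{r/q}^1)_t(\rn)$ (whose integrability exponent $r/q$ can drop to or below $1$ when $q\ge r$) and the ball-counting step, while the paper's covering argument is the one that generalizes when no pointwise bound on the inner average is available; in both proofs the restriction $r_0>1$ plays the same role of absorbing the fixed scale $t$ (respectively, of making the number of unit balls comparable to $r_0^n$), and in both the constant legitimately depends on $t$, $n$, $q$, $r$ but is uniform in $x_0$ and $r_0$.
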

\begin{proof}
For any $x\in\rn$, we have
\begin{align*}
\lf\|\mathbf1_{B(x_0,r_0)}\r\|_{(E_r^q)_t(\rn)}
&=\lf\{\int_{\rn}\lf[\frac{\lf\|\mathbf1_{B(x_0,r_0)}\mathbf1_{B(x,t)}\r\|_{L^r}}
{\lf\|\mathbf1_{B(x,t)}\r\|_{L^r}}\r]^qdx\r\}^\frac1q\\
&=\frac1{\lf\|\mathbf1_{B(\vec{0},t)}\r\|_{L^r(\rn)}}\lf[\int_{\rn}
\lf\|\mathbf1_{B(\vec{0},r_0)}\mathbf1_{B(x-x_0,t)}\r\|_{L^r}^qdx\r]^\frac1q\\
&=\frac1{\lf\|\mathbf1_{B(\vec{0},t)}\r\|_{L^r(\rn)}}\lf[\int_{\rn}
\lf\|\mathbf1_{B(\vec{0},r_0)}\mathbf1_{B(x,t)}\r\|_{L^r}^qdx\r]^\frac1q\\
&=\lf\|\mathbf1_{B(\vec{0},r_0)}\r\|_{(E_r^q)_t(\rn)}.
\end{align*}
By this, without loss of generality, suppose that $B_1:=B(\vec{0},1)$ and $B_2:=B(\vec{0},r_0)$ with $1<r_0<\fz$.
 By the geometric property, we know that there exist $M\in\nn$ with $M\sim |B_2|^n$ and
 $\{x_1,...,x_M\}$ such that $B(\vec{0},r_0)\subset\bigcup_{i=1}^MB(x_i,1)$,  which implies that
\begin{align*}
\lf\|\mathbf1_{B(\vec{0},r_0)}\r\|_{(E_r^q)_t(\rn)}&=\lf\|\mathbf1_{B_2}\r\|^\frac1q_{(E_{r/q}^1)_t(\rn)}\\
&\le\lf\|\sum_{i=1}^M\mathbf1_{B(x_i,1)}\r\|^\frac1q_{(E_{r/q}^1)_t(\rn)}
\lesssim \lf(\sum_{i=1}^M\lf\|\mathbf1_{B(x_i,1)}\r\|_{(E_{r/q}^1)_t(\rn)}\r)^\frac1q\\
&\sim |B_2|^\frac1q\lf\|\mathbf1_{B(\vec{0},1)}\r\|_{(E_r^q)_t(\rn)}
\sim r_0^{n/q}.
\end{align*}
This completes the proof.
\end{proof}
\begin{remark}
Let $t\in(0,\fz)$, $r,\,q\in(1,\fz)$, $k\in\zz$, a characteristic function
on $S_k$ satisfies
$$
\lf\|\mathbf1_{S_k}\r\|_{(E_r^q)_t(\rn)}\leq \lf\|\mathbf1_{B_k}\r\|_{(E_r^q)_t(\rn)}\leq C2^{kn/q}.
$$
\end{remark}
\begin{proof}[Proof of Theorem \ref{ThMf}]
Write
$$
f(x)=\sum\limits_{l\in\zz} f(x)\mathbf1_{S_l}(x):= \sum\limits_{l\in\zz} f_l(x).
$$
For $q\in(1,\fz)$, we get
\begin{align*}
\lf\|Mf\r\|_{(\dot KE_{q,r}^{\alpha,p})_t(\rn)}&=\lf(\sum\limits_{k\in\zz}2^{k\alpha p}\lf\|Mf\mathbf1_{S_k}\r\|^p_{(E_r^q)_t(\rn)}\r)^\frac1p
=\lf(\sum\limits_{k\in\zz}2^{k\alpha p}\lf\|\sum\limits_{l=-\fz}^\fz Mf_l\mathbf1_{S_k}\r\|^p_{(E_r^q)_t(\rn)}\r)^\frac1p\\
&\leq \lf(\sum\limits_{k\in\zz}2^{k\alpha p}\lf\|\sum\limits_{l=-\fz}^{k-2} Mf_l\mathbf1_{S_k}\r\|^p_{(E_r^q)_t(\rn)}\r)^\frac1p
+\lf(\sum\limits_{k\in\zz}2^{k\alpha p}\lf\|\sum\limits_{l=k-1}^{k+1} Mf_l\mathbf1_{S_k}\r\|^p_{(E_r^q)_t(\rn)}\r)^\frac1p\\
&+\lf(\sum\limits_{k\in\zz}2^{k\alpha p}\lf\|\sum\limits_{l=k+2}^\fz Mf_l\mathbf1_{S_k}\r\|^p_{(E_r^q)_t(\rn)}\r)^\frac1p
:=I+II+III.
\end{align*}
For $p\in(0,1]$, applying the fact that for $l\leq k-2$ and $x\in S_k$,
$
\lf|M(f_l)(x)\r|\leq C2^{-kn}\lf\|f_l\r\|_{L^1(\rn)},
$
and using Lemmas \ref{Holder} and \ref{CF} one can deduce that
\begin{align*}
I&=\lf(\sum\limits_{k\in\zz}2^{k\alpha p}\lf\|\sum\limits_{l=-\fz}^{k-2} Mf_l\mathbf1_{S_k}\r\|^p_{(E_r^q)_t(\rn)}\r)^\frac1p\\\
&\lesssim\lf(\sum\limits_{k\in\zz}2^{k\alpha p}\lf\|\sum\limits_{l=-\fz}^{k-2} \lf\|f_l\r\|_{L^1(\rn)}2^{-kn}\mathbf1_{S_k}\r\|^p_{(E_r^q)_t(\rn)}\r)^\frac1p\\
&\lesssim\lf(\sum\limits_{k\in\zz}2^{k\alpha p}\lf\|\sum\limits_{l=-\fz}^{k-2} \lf\|f_l\r\|_{(E_r^q)_t(\rn)}\lf\|\mathbf1_{S_l}\r\|_{(E_{r'}^{q'})_t(\rn)}
2^{-kn}\mathbf1_{S_k}\r\|^p_{(E_r^q)_t(\rn)}\r)^\frac1p\\
&\lesssim \lf(\sum\limits_{k\in\zz}2^{k\alpha p}\sum\limits_{l=-\fz}^{k-2} \lf\|f_l\r\|^p_{(E_r^q)_t(\rn)}\lf\|\mathbf1_{S_l}\r\|^p_{(E_{r'}^{q'})_t(\rn)}2^{-knp}
\lf\|\mathbf1_{S_k}\r\|^p_{(E_r^q)_t(\rn)}\r)^\frac1p\\
&\lesssim \lf(\sum\limits_{k\in\zz}2^{k\alpha p}\sum\limits_{l=-\fz}^{k-2} \lf\|f_l\r\|^p_{(E_r^q)_t(\rn)}\lf\|\mathbf1_{B_l}\r\|^p_{(E_{r'}^{q'})_t(\rn)}2^{-knp}
\lf\|\mathbf1_{B_k}\r\|^p_{(E_r^q)_t(\rn)}\r)^\frac1p\\
\end{align*}
\begin{align*}
&\lesssim\lf(\sum\limits_{k\in\zz}2^{k\alpha p}\sum\limits_{l=-\fz}^{k-2} \|f_l\|^p_{(E_r^q)_t(\rn)}
2^{(k-l)p(1/q-1)n}\r)^\frac1p\\
&\lesssim \lf(\sum\limits_{k\in\zz}2^{k\alpha p}\sum\limits_{l=-\fz}^{k-2} \|f_l\|^p_{(E_r^q)_t(\rn)}
2^{-(k-l)p\az}\r)^\frac1p\\
&\lesssim \lf(\sum\limits_{l\in\zz}2^{l\az p}\|f_l\|^p_{(E_r^q)_t(\rn)}\r)^\frac1p
\lesssim \|f\|_{(\dot KE_{q,r}^{\alpha,p})_t(\rn)},
\end{align*}
and for $p\in(1,\fz)$,
\begin{align*}
I&=\lf(\sum\limits_{k\in\zz}2^{k\alpha p}\lf\|\sum\limits_{l=-\fz}^{k-2} Mf_l\mathbf1_{S_k}\r\|^p_{(E_r^q)_t(\rn)}\r)^\frac1p\\\
&\lesssim\lf(\sum\limits_{k\in\zz}2^{k\alpha p}\lf\|\sum\limits_{l=-\fz}^{k-2} \|f_l\|_{L^1(\rn)}2^{-kn}\mathbf1_{S_k}\r\|^p_{(E_r^q)_t(\rn)}\r)^\frac1p\\
&\lesssim\lf(\sum\limits_{k\in\zz}2^{k\alpha p}\lf\|\sum\limits_{l=-\fz}^{k-2} \|f_l\|_{(E_r^q)_t(\rn)}\lf\|\mathbf1_{S_l}\r\|_{(E_{r'}^{q'})_t(\rn)}
2^{-kn}\mathbf1_{S_k}\r\|^p_{(E_r^q)_t(\rn)}\r)^\frac1p\\
&\lesssim \lf(\sum\limits_{k\in\zz}2^{k\alpha p}\lf(\sum\limits_{l=-\fz}^{k-2} \|f_l\|_{(E_r^q)_t(\rn)}\r)^p\lf\|\mathbf1_{S_l}\r\|^p_{(E_{r'}^{q'})_t(\rn)}2^{-knp}
\lf\|\mathbf1_{S_k}\r\|^p_{(E_r^q)_t(\rn)}\r)^\frac1p\\
&\lesssim \lf(\sum\limits_{k\in\zz}2^{k\alpha p}\lf(\sum\limits_{l=-\fz}^{k-2} \|f_l\|_{(E_r^q)_t(\rn)}\r)^p\lf\|\mathbf1_{B_l}\r\|^p_{(E_{r'}^{q'})_t(\rn)}2^{-knp}
\lf\|\mathbf1_{B_k}\r\|^p_{(E_r^q)_t(\rn)}\r)^\frac1p\\
&\lesssim \lf(\sum\limits_{k\in\zz}2^{k\alpha p}\lf(\sum\limits_{l=-\fz}^{k-2} \|f_l\|_{(E_r^q)_t(\rn)}\r)^p2^{(k-l)p(1/q-1)n}\r)^\frac1p\\
&\lesssim \lf(\sum\limits_{k\in\zz}2^{k\alpha p}\lf(\sum\limits_{l=-\fz}^{k-2} \|f_l\|_{(E_r^q)_t(\rn)}2^{(k-l)(1/q-1)n}\r)^p\r)^\frac1p\\
&\lesssim \lf(\sum\limits_{k\in\zz}2^{k\alpha p}\lf(\sum\limits_{l=-\fz}^{k-2} \|f_l\|^p_{(E_r^q)_t(\rn)}2^{(k-l)(1/q-1)np/2}\r)
\lf(\sum\limits_{l=-\fz}^{k-2}2^{(k-l)(1/q-1)np'/2}\r)^{p/p'}\r)^\frac1p\\
&\lesssim \|f\|_{(\dot KE_{q,r}^{\alpha,p})_t(\rn)}.
\end{align*}
Using Lemma \ref{ProMf}, we obtain the following estimate for $II$:
\begin{align*}
II
&\lesssim \lf(\sum\limits_{k\in\zz}2^{k\alpha p}\lf\|\sum\limits_{l=k-1}^{k+1} Mf_l\r\|^p_{(E_r^q)_t(\rn)}\r)^\frac1p
\lesssim \lf(\sum\limits_{k\in\zz}\sum\limits_{l=k-1}^{k+1}2^{(k-l)\alpha p}2^{l\az p}\|f_l\|^p_{(E_r^q)_t(\rn)}\r)^\frac1p\\
&\lesssim \lf(\sum\limits_{l\in\zz}2^{l\az p}\|f_l\|^p_{(E_r^q)_t(\rn)}\r)^\frac1p
\lesssim \|f\|_{(\dot KE_{q,r}^{\alpha,p})_t(\rn)}.
\end{align*}
We then prove $III$. Note that if $l\ge k+2$, then
$$
\lf|M(f_l)(x)\r|\mathbf1_{S_k}\leq C2^{-nl}\lf\|f_l\r\|_{L^1(\rn)}.
$$
For $0<p\leq 1$,
(\ref{cp}) and Lemma \ref{Holder} yield
\begin{align*}
III
&\lesssim \lf(\sum\limits_{k\in\zz}2^{k\alpha p}\lf\|\sum\limits_{l=k+2}^\fz Mf_l\mathbf1_{S_k}\r\|^p_{(E_r^q)_t(\rn)}\r)^\frac1p\\
&\lesssim \lf(\sum\limits_{k\in\zz}2^{k\alpha p}\lf\|\sum\limits_{l=k+2}^\fz \|f_l\|_{L^1(\rn)}2^{-nl}\mathbf1_{S_k}\r\|^p_{(E_r^q)_t(\rn)}\r)^\frac1p\\
&\lesssim \lf(\sum\limits_{k\in\zz}2^{k\alpha p}\lf\|\sum\limits_{l=k+2}^\fz \|f_l\|_{(E_r^q)_t(\rn)}\lf\|\mathbf1_{S_l}\r\|_{(E_{r'}^{q'})_t(\rn)}2^{-ln}
\mathbf1_{S_k}\r\|^p_{(E_r^q)_t(\rn)}\r)^\frac1p\\
&\lesssim \lf(\sum\limits_{k\in\zz}2^{k\alpha p}\sum\limits_{l=k+2}^\fz\|f_l\|^p_{(E_r^q)_t(\rn)}
\lf\|\mathbf1_{B_l}\r\|^p_{(E_{r'}^{q'})_t(\rn)}2^{-lpn}
\lf\|\mathbf1_{B_k}\r\|^p_{(E_r^q)_t(\rn)}\r)^\frac1p\\
&\lesssim \lf(\sum\limits_{k\in\zz}2^{k\alpha p}\sum\limits_{l=k+2}^\fz\|f_l\|^p_{(E_r^q)_t(\rn)}
2^{lpn/q'}2^{-lpn}2^{knp/q}\r)^\frac1p\\
&\lesssim \lf(\sum\limits_{k\in\zz}2^{k\alpha p}\sum\limits_{l=k+2}^\fz\|f_l\|^p_{(E_r^q)_t(\rn)}
2^{(l-k)p\az}\r)^\frac1p\\
&\lesssim \lf(\sum\limits_{k\in\zz} 2^{l\az p} \|f_l\|^p_{(E_r^q)_t(\rn)}\r)^\frac1p
\lesssim \|f\|_{(\dot KE_{q,r}^{\alpha,p})_t(\rn)},
\end{align*}
and by Lemma \ref{Holder} and H\"older's inequality for $1<p<\fz$,
\begin{align*}
III
&\lesssim \lf(\sum\limits_{k\in\zz}2^{k\alpha p}\lf\|\sum\limits_{l=k+2}^\fz Mf_l\mathbf1_{S_k}\r\|^p_{(E_r^q)_t(\rn)}\r)^\frac1p\\
&\lesssim \lf(\sum\limits_{k\in\zz}2^{k\alpha p}\lf\|\sum\limits_{l=k+2}^\fz \|f_l\|_{L^1(\rn)}2^{-ln}\mathbf1_{S_k}\r\|^p_{(E_r^q)_t(\rn)}\r)^\frac1p\\
&\lesssim \lf(\sum\limits_{k\in\zz}2^{k\alpha p}\lf\|\sum\limits_{l=k+2}^\fz \|f_l\|_{(E_r^q)_t(\rn)}\lf\|\mathbf1_{S_l}\r\|_{(E_{r'}^{q'})_t(\rn)}2^{-nl}
\mathbf1_{S_k}\r\|^p_{(E_r^q)_t(\rn)}\r)^\frac1p\\
&\lesssim \lf(\sum\limits_{k\in\zz}2^{k\alpha p}\lf(\sum\limits_{l=k+2}^\fz\|f_l\|_{(E_r^q)_t(\rn)}\r)^p
\lf\|\mathbf1_{B_l}\r\|^p_{(E_{r'}^{q'})_t(\rn)}2^{-lpn}
\lf\|\mathbf1_{B_k}\r\|^p_{(E_r^q)_t(\rn)}\r)^\frac1p\\
&\lesssim \lf(\sum\limits_{k\in\zz}2^{k\alpha p}\sum\limits_{l=k+2}^\fz\|f_l\|^p_{(E_r^q)_t(\rn)}
2^{(k-l)np/(2q)}\lf(\sum\limits_{l=k+2}^\fz2^{(k-l)np'/(2q)}\r)^{p/p'}\r)^\frac1p\\
&\lesssim \lf(\sum\limits_{k\in\zz} 2^{l\az p} \|f_l\|^p_{(E_r^q)_t(\rn)}\lf(\sum\limits_{k=-\fz}^{l-2}2^{(k-l)np/(2q)}\r)\r)^\frac1p
\lesssim \|f\|_{(\dot KE_{q,r}^{\alpha,p})_t(\rn)}.
\end{align*}
For $q=1$, for any $\lz>0$,
\begin{align*}
\lz\lf(\sum_{k\in\zz}2^{k\az p}\lf\|\mathbf 1_{\{x\in S_k:~|Mf(x)|>\lambda\}}\r\|^p_{(E_r^q)_t(\rn)}\r)^\frac1p
&\leq
\lz\lf(\sum_{k\in\zz}2^{k\az p}\lf\|\mathbf 1_{\lf\{x\in S_k:~\lf| M\lf(\sum\limits_{l=-\fz}^{k-3} f_l\r)(x)\r|>\lambda/3\r\}}\r\|^p_{(E_r^q)_t(\rn)}\r)^\frac1p\\
&+\lz\lf(\sum_{k\in\zz}2^{k\az p}\lf\|\mathbf 1_{\lf\{x\in S_k:~\lf| M\lf(\sum\limits_{l=k-2}^{k+2} f_l\r)(x)\r|>\lambda/3\r\}}\r\|^p_{(E_r^q)_t(\rn)}\r)^\frac1p\\
&+\lz\lf(\sum_{k\in\zz}2^{k\az p}\lf\|\mathbf 1_{\lf\{x\in S_k:~\lf| M\lf(\sum\limits_{l=k+3}^{\fz} f_l\r)(x)\r|>\lambda/3\r\}}\r\|^p_{(E_r^q)_t(\rn)}\r)^\frac1p\\
&:=I+II+III.
\end{align*}
Since $M$ is bounded from $(E_r^1)_t(\rn)$ to $W(E_r^1)_t(\rn)$ in Lemma \ref{ProMf}, then
$$
II\lesssim\lf(\sum_{k\in\zz}2^{k\az p}\sum\limits_{l=k-2}^{k+2}\lf\|f_l\mathbf1_{S_l}\r\|^p_{(E_r^1)_t(\rn)}\r)^\frac1p
\lesssim \|f\|_{(\dot KE_{1,r}^{\alpha,p})_t(\rn)}.
$$
$I$ and $III$ hold since the similar proof of the case of $q\in(1,\fz)$.

This completes the proof of Theorem \ref{ThMf}.
\end{proof}
The $(KE_{q,r}^{\alpha,p})_t(\rn)$ boundedness of the Hardy--Littlewood maximal function $M$
is valid in the following, via a similar argument to that given in the proof of Theorem \ref{ThMf},
\begin{theorem}
Let $t,\,p\in(0,\infty)$, $q\in[1,\infty)$, $r\in(1,\infty)$ and $-n/q<\alpha<n(1-1/q)$.
Then the Hardy--Littlewood maximal function is bounded on $(KE_{q,r}^{\alpha,p})_t(\rn)$.
\end{theorem}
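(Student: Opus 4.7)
The plan is to imitate the proof of Theorem \ref{ThMf} with the indexing set truncated to the non-negative integers, since the only real difference between the homogeneous and non-homogeneous Herz-slice spaces is that the annular decomposition in the latter starts from $k=0$ (with $\mathbf{1}_{S_0}=\mathbf{1}_{B_0}$). In particular, the assumption $-n/q<\alpha<n(1-1/q)$ is exactly what one needs in order for the relevant geometric series in $2^{(k-l)\alpha}$ and $2^{(k-l)(1/q-1)n}$ to converge, and these are the same conditions used in Theorem \ref{ThMf}.

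First I would write
$$
f(x)=\sum_{l=0}^\infty f(x)\mathbf{1}_{S_l}(x)=:\sum_{l=0}^\infty f_l(x),
$$
so that
$$
\|Mf\|_{(KE_{q,r}^{\alpha,p})_t(\rn)}
=\lf(\sum_{k=0}^\infty 2^{k\alpha p}\Big\|\sum_{l=0}^\infty Mf_l\,\mathbf{1}_{S_k}\Big\|_{(E_r^q)_t(\rn)}^p\r)^{1/p}.
$$
Then I would split the inner sum according to whether $0\le l\le k-2$, $\max\{0,k-1\}\le l\le k+1$, or $l\ge k+2$, producing three terms $I$, $II$, $III$. For the near terms $II$, Lemma \ref{ProMf} gives $\|Mf_l\mathbf{1}_{S_k}\|_{(E_r^q)_t(\rn)}\lesssim\|f_l\|_{(E_r^q)_t(\rn)}$ when $q>1$, and a weak-type substitute when $q=1$ as in the last part of the proof of Theorem \ref{ThMf}; shifting indices bounds $II$ by $\|f\|_{(KE_{q,r}^{\alpha,p})_t(\rn)}$. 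For $I$ and $III$, I would use the pointwise bounds $|Mf_l(x)|\lesssim 2^{-kn}\|f_l\|_{L^1}$ on $S_k$ when $l\le k-2$, and $|Mf_l(x)|\lesssim 2^{-ln}\|f_l\|_{L^1}$ on $S_k$ when $l\ge k+2$, then invoke Lemma \ref{Holder} together with Lemma \ref{CF} to pass from $\|f_l\|_{L^1}$ to $\|f_l\|_{(E_r^q)_t(\rn)}\,2^{ln/q'}$. The remaining step is exactly the geometric-series and H\"older manipulation carried out in detail in Theorem \ref{ThMf}, yielding bounds of the form $\sum_l 2^{l\alpha p}\|f_l\|_{(E_r^q)_t(\rn)}^p$ for $p\le 1$ and an analogous double-sum estimate for $p>1$.

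The main point to watch is the treatment of the smallest annulus $S_0=B_0$: the pointwise bound $|Mf_l(x)|\lesssim 2^{-kn}\|f_l\|_{L^1}$ for $x\in S_k$ and $l\le k-2$ was derived from the disjointness of concentric annuli and the distance between $S_k$ and $\supp f_l$, and one must verify that it still holds when $l=0$ (where $\supp f_l\subset B_0$ rather than a thin annulus); this is immediate from $|B(x,\rho)|\gtrsim 2^{kn}$ once $\rho\gtrsim 2^k$. Beyond that, all inequalities and the role of the conditions on $\alpha$ are identical to the homogeneous case, so the only obstacle is really bookkeeping: ensuring the truncation at $k=0$ does not alter the geometric-series convergence, which it does not because the convergent series all sum in the direction of large $|k-l|$.

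Finally, as an alternative route when $\alpha>0$ one could deduce the result directly from Theorem \ref{ThMf} via Proposition \ref{L}, since $Mf$ would then lie in $(\dot{K}E_{q,r}^{\alpha,p})_t(\rn)\cap(E_r^q)_t(\rn)$ once $f$ does; but this shortcut does not cover the full range $-n/q<\alpha\le 0$, so the direct argument above is required anyway.
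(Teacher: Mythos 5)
Your proposal is correct and follows essentially the paper's own route: the paper proves the non-homogeneous case simply by observing that the argument of Theorem \ref{ThMf} carries over with the annular sum truncated to $k\ge 0$, and your decomposition into the three regimes $l\le k-2$, $|l-k|\le 1$, $l\ge k+2$, the pointwise far-annulus bounds, Lemmas \ref{Holder}, \ref{CF} and \ref{ProMf}, and the geometric series governed by $\alpha+n/q>0$ and $\alpha<n(1-1/q)$ are exactly that adaptation, including the correct check that the block $l=0$ (supported in $B_0$ rather than an annulus) causes no difficulty. The only caveat, inherited from the paper's own proof of Theorem \ref{ThMf} rather than introduced by you, is that the case $q=1$ is handled there only by a weak-type estimate.
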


%
%
%
%
%



\hspace*{-0.6cm}\textbf{\bf Competing interests}\\
The authors declare that they have no competing interests.\\

\hspace*{-0.6cm}\textbf{\bf Funding}\\
The research was supported by the National Natural Science Foundation of China (Grant No. 12061069) and the Natural
Science Foundation Project of Chongqing, China (Grant No. cstc2021jcyj-msxmX0705).\\

\hspace*{-0.6cm}\textbf{\bf Authors contributions}\\
All authors contributed equality and significantly in writing this paper. All authors read and approved the final manuscript.\\

\hspace*{-0.6cm}\textbf{\bf Acknowledgments}\\
All authors would like to express their thanks to the referees for valuable advice regarding previous version of this paper.\\

\hspace*{-0.6cm}\textbf{\bf Authors detaials}\\
Yuan Lu(Luyuan\_y@163.com) and Jiang Zhou(zhoujiang@xju.edu.cn),
College of Mathematics and System Science, Xinjiang University, Urumqi, 830046, P.R China.\\
Songbai Wang(haiyansongbai@163.com),
College of Mathematics and Statistics, Chongqing Three Gorges University, Chongqing 404130, P.R China.\\

\bigskip
\noindent Yuan Lu\\
College of Mathematics and System Sciences\\
Xinjiang University\\
Urumqi 830046, China\\
\smallskip
\noindent\emph{Email address}: \texttt{Luyuan\_y@163.com}\\

\noindent Jiang Zhou\\
College of Mathematics and System Sciences\\
Xinjiang University\\
Urumqi 830046, China\\
\smallskip
\noindent\emph{Email address}: \texttt{zhoujiang@xju.edu.cn}\\

\noindent Songbai Wang\\
Chongqing Three Gorges University\\
Chongqing 404130, China\\
\smallskip
\noindent\emph{Email address}: {haiyansongbai@163.com}\\
\bigskip \medskip

\end{document}